\documentclass[12pt,reqno]{amsart}
\usepackage[T1]{fontenc}
\usepackage{amssymb, a4wide,xy}
\usepackage{amsmath,amsthm, amsfonts}
\usepackage{xcolor}
\xyoption{all}

\usepackage{multicol}
\usepackage{hyperref}

\newtheorem{theorem}[subsection]{Theorem}
\newtheorem{lemma}[subsection]{Lemma}
\newtheorem{proposition}[subsection]{Proposition}
\newtheorem{corollary}[subsection]{Corollary}

\theoremstyle{definition}
\newtheorem{remark}[subsection]{Remark}
\newtheorem{definition}[subsection]{Definition}

\def\pa{\partial}

\def\lam{\lambda}
\newcommand{\La}{\textsf{R}}

\def\lra{\longrightarrow}

\DeclareMathOperator{\Ker}{\mathsf {Ker}}

\DeclareMathOperator{\Img}{\mathsf {Im}}
\DeclareMathOperator{\Z}{\mathsf {Z}}

\def\Der{\operatorname{Der}}
\def\IDer{\operatorname{IDer}}
\def\InnAut{\operatorname{InnAut}}

\def\lim{\operatorname{lim}}

\newcommand{\g}{\mathfrak{g}}
\newcommand{\s}{\mathfrak{s}}
\newcommand{\rr}{\mathfrak{r}}

\newcommand{\h}{\mathfrak{h}}
\newcommand{\f}{\mathfrak{f}}
\newcommand{\m}{\mathfrak{m}}
\newcommand{\n}{\mathfrak{n}}
\newcommand{\cc}{\mathfrak{c}}
\newcommand{\e}{\mathfrak{e}}

\newcommand{\q}{{\textsl{q}}}

\begin{document}

\title[$\q$-crossed modules and $\q$-capability of Lie algebras]{$\q$-crossed modules and $\q$-capability of Lie algebras}

\author[E. Khmaladze]{Emzar Khmaladze$^1$}
\address{$^1$The University of Georgia, Kostava St. 77a, 0171 Tbilisi, Georgia \& A. Razmadze Mathematical Institute of Tbilisi State University,
	Tamarashvili St. 6, 0177 Tbilisi, Georgia   }
\email{e.khmal@gmail.com}
\author[M. Ladra]{Manuel Ladra$^2$}
\address{$^2$Department of Mathematics, CITMAga, Universidade de Santiago de Compostela\\15782 Santiago de Compostela, Spain}
\email{manuel.ladra@usc.es}

\begin{abstract}
Given a non-negative integer $\q$, we study two different notions of the $\q$-capability of Lie algebras via the non-abelian $\q$-exterior product of Lie algebras. The first  is related to the $\q$-crossed modules and inner $\q$-derivations, and the second  is the Lie algebra version of the $\q$-capability of groups proposed by Ellis in 1995.
\end{abstract}

\subjclass[2010]{18G10, 18G50.}

\keywords{Lie algebra, capable Lie algebra, non-abelian $q$-tensor and $q$-exterior products, tensor and exterior centers.}

\maketitle

\section{Introduction}\label{S:In}

\subsection{A brief overview of mod $\q$ theories}
 Since the 1980s, many important articles have been published devoted to studying modulo $\q$ (also called finite) versions of algebraic and topological theories. Neisendorfer \cite{Ne} developed the homotopy theory with coefficients in the abelian group $\mathbb{Z}/\q\mathbb{Z}$ (primary homotopy theory) and studied such essential 
  topics as mod $\q$ Hurewicz theorems, exponents of mod $\q$ homotopy groups, mod $\q$ Samelson products and so on.
  Browder \cite{Brd}  investigated the algebraic $K$-theory with $\mathbb{Z}/\q\mathbb{Z}$ coefficients (the mod $\q$ algebraic $K$-theory).
    Suslin and Voevodsky \cite{SuVo} calculated the mod $2$ algebraic $K$-theory of the integers due to Voevodsky's solution of the Milnor conjecture. Karoubi and Lambre \cite{KaLa} constructed the Dennis trace map from mod $\q$ algebraic $K$-theory to mod $\q$ Hochschild homology, having an unexpected relationship with number theory.
    
    A considerable number of papers are devoted to investigating the non-abelian mod $\q$ tensor and exterior products of groups; see, for example, \cite{Brn, CoRo, El2, ElRo}. It is essential to note the applications of these constructions in group homologies with $\mathbb{Z}/\q\mathbb{Z}$ coefficients and in the description of the universal $\q$-central extensions. Furthermore, they are used in developing the mod $\q$ (co)homology theory of groups by Conduch\'e, H. Inassaridze, and N. Inassaridze \cite{CoInIn}.
    
   The mod $\q$ versions of Lie algebras's non-abelian tensor and exterior products \cite{El1, El3} were developed in \cite{Kh1, Kh2}. Among other results,  applications of the mod $\q$ non-abelian tensor and exterior products to the Lie algebra homologies with coefficients in the ground ring modulo $\q$, and to the description of the universal $\q$-central relative extensions of Lie algebras are obtained.
   
\subsection{Shortly on capability and $\q$-capability of groups}
The capability of groups was first studied by  Baer \cite{Ba}, who classified all capable groups among the finitely generated abelian groups. Regarding the classification of prime-power groups, Hall \cite{Ha} found it important and interesting to investigate what condition a group must satisfy to be isomorphic to a central quotient group of another group, or equivalently, to be isomorphic to an inner automorphism group of another group.
This laid the foundation for introducing and investigating the concept of capable groups. We say that \emph{a group $G$ is capable} if there exists a group $H$ such that $G \cong H/\Z(H)$, or equivalently, $G\cong \InnAut(H)$, where $\Z(H)$ denotes the center and $\InnAut(H)$ represents the group of inner automorphisms of  $H$. An overview of the basic theory on the capability of groups is given in \cite{BeTa}. In our interest, it is also essential to mention the article \cite{BeFeSc}, where the notion of the epicenter $\Z^*(G)$ of a group $G$ is introduced and proved that $G$ is capable if and only if  $\Z^*(G)$ is the trivial subgroup of $G$. 

Later on, using the non-abelian exterior square of groups, Ellis introduced the exterior center $\Z^{\wedge}(G)$ of a group $G$ in \cite{El2} and proved that $G$ is capable if and only if $\Z^{\wedge}(G) \cong \{1\}$. In fact, he worked more generally in the modulo q setting, and the result mentioned is the case when $\q = 0$.  More concretely, for any non-negative integer $\q$, he introduced the $\q$-capability of a group and gave its necessary and sufficient condition via the non-abelian $\q$-exterior square introduced in \cite{ElRo}.

Recently, a different notion of a $\q$-capable group was introduced and studied in \cite{BaOlDoRo}, which seems more natural to us than the one in \cite{El2}. The ideas of \cite{BaOlDoRo} inspired us to export similar results for Lie algebras in the modulo $\q$ setting.

\subsection{The content of the paper}
In this paper, we choose to study two different notions of $\q$-capable Lie algebras: the Lie algebraic versions of the respective group-theoretic notions from \cite{BaOlDoRo} and from \cite{El2}. For $\q=0$, both concepts are the same and coincide with the definition of capable Lie algebras introduced in \cite{SaAlMo} and further investigated in \cite{KhKuLa,NiJoPa,NiPaRu}.

The paper is organized as follows. Section~\ref{S:crossed} recalls some basic definitions of $\q$-crossed modules, non-abelian $\q$-tensor and $\q$-exterior products of Lie algebras and their relations
 with the (universal) $\q$-central extensions. In Section~\ref{S:additional}, we study other properties of the non-abelian $\q$-tensor and $\q$-exterior products and give the necessary results for developing the paper.
 In Section~\ref{S:CapL}, we introduce the notion of $\q$-capable Lie algebra (Definition~\ref{Def q-capable}) 
 and give its connection with $\q$-crossed modules and inner $\q$-derivations (Proposition \ref{Prop q-crossed},
  Corollary~\ref{Cor q-der}). Then we introduce the notion of $\q$-exterior center of a Lie algebra  (Definition~\ref{Def q-exterior}) 
  and present one of the paper's main results, proving that  a Lie algebra is $\q$-capable if and only if its $\q$-exterior center is trivial (Theorem~\ref{theorem2}).
   Section~\ref{S:E} introduces one more notion: the notion of the $\q$-tensor center of a Lie algebra (Definition \ref{Def q-tensor center}) and fined a condition under which it coincides with the $\q$-exterior center (Theorem~\ref{T:t_e_centers}).  Finally, in Section~\ref{S:Strongly}, we give the Lie algebra version of the $\q$-capability in the sense of Ellis \cite{El2}, which differs from our notion of $\q$-capability, and we call it strongly $\q$-capability (Definition~\ref{Def strong}). Then we give another main result establishing necessary and sufficient condition for a Lie algebra to be strongly $\q$-capable (Theorem \ref{theoremE1}).

\subsection{Conventions and notation}
Throughout this paper, we denote by $\q$ a non-negative integer and by $\La$ a unital commutative ring unless otherwise
stated. We denote by $\La_{\q}$ the quotient ring $\La/{\q}\La$. We use the term Lie algebra to mean a Lie algebra over $\La$. We denote by $[~,~]$ the Lie bracket.

\section{$\q$-crossed modules of Lie algebras} \label{S:crossed}

Let $\g$ and $\h$ be two Lie algebras. \emph{A Lie (left-)action} of $\g$ on $\h$
is a $\La$-bilinear map $\g \times \h \to \h$, $(g,h)\mapsto {}^gh$ such that the following conditions hold
\begin{itemize}
\item ${}^{[g,g']}h={}^g(^{g'}h)-{}^{g'}(^{g}h)$,
\item ${}^g[h,h']=[{}^gh,h']+[h,{}^gh']$.
\end{itemize}
For example, if $\h$ is an ideal of $\g$, then the Lie bracket in $\g$ yields a Lie action of
$\g$ on $\h$.

\

Let
\[
0\to \h \overset{i}{\lra} \e \overset{\pi}{\lra} \g\to 0
\]
be a central extension of Lie algebras, i.e. $\Ker \pi$ is contained in the center
\[
\Z(\e ) = \big \{  e \in \e \mid  [e, x]=0 \:\text{for each}\: x\in \e \big \}
\]
of the Lie algebra $\e$. Take $g\in \g$ and choose $e_g\in \e$ such that $\pi(e_g)=g$. Then,  for any $h\in \h$ and $e\in \e$ one sets
\[
{}^gh=i^{-1}\big([e_g, i(h)]\big) \quad \text{and} \quad {}^ge=[e_g, e].
\]
These elements are independent on the choice of $e_g$, and we get Lie actions of $\g$ on both $\h$ and $\e$.

\begin{definition}\label{Def_q crossed}
Let $\g$ be a Lie algebra.
\emph{A $\q$-crossed $\g$-module} $(\h, \mu)$ is a homomorphism of Lie algebras $\mu \colon \h \to \g$ with an action  of $\g$ on $\h$ satisfying
\begin{enumerate}
\item[(i)] $\mu({}^gh)=[g, \mu(h)]$,
\item[(ii)] ${}^{\mu(h)}h'=[h,h']$,
\item[(iii)] $\q k=0 $
\end{enumerate}
for all $g\in \g$, $h,h'\in \h$ and $k\in \Ker \mu$.
\end{definition}
Obviously, for $\q=0$, this is just the definition of \emph{a crossed $\g$-module}, see e.g. \cite{KaLo}.
\emph{A morphism of $\q$-crossed $\g$-modules} $\varphi \colon (\h, \mu) \to (\h',\mu')$ is a $\g$-equivariant homomorphism of Lie algebras $\varphi \colon \h \to \h'$, i.e. $\varphi({}^gh)={}^g{\varphi(h)}$ for all $g\in \g$ and $h\in\h$, such that $\mu' \varphi = \mu$.

\

It follows immediately from Definition \ref{Def_q crossed} the following

\begin{lemma}
Let $0\lra \h \overset{i}{\lra} \e \overset{\pi}{\lra} \g\lra 0$
be a $\q$-central extension of Lie algebras, that is, $\Ker \pi$ is contained in the $\q$-center
\[
\Z_{\q}(\e ) = \big \{e \in \Z(\e) \mid \q e = 0 \big \}
\]
 of $\e$ (see \cite{Kh1}). Then $\pi \colon \e \to \g$ with the above-described Lie action of $\g$ on $\e$, is a $\q$-crossed $\g$-module.

Conversely, given a $\q$-crossed $\g$-module $\mu \colon \h\to\g$, it follows from the definition that $\Ker \mu$ is a $\q$-central ideal of $\h$. Thus, if $\mu$ is an epimorphism, then it gives rise to a $\q$-central extension
\[
0\lra \Ker \mu {\lra} \e \overset{\mu}{\lra} \g\lra 0.
\]
\end{lemma}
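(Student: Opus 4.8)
The plan is to verify the two directions of the lemma directly against the axioms (i)--(iii) in Definition~\ref{Def_q crossed}, using the explicitly described action on $\e$ coming from a central extension.

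For the first assertion, let $0\lra \h \overset{i}{\lra} \e \overset{\pi}{\lra} \g\lra 0$ be a $\q$-central extension and equip $\e$ with the $\g$-action ${}^ge=[e_g,e]$, where $\pi(e_g)=g$; I would first recall (as already noted in the excerpt) that this is well defined and is a genuine Lie action. Then I check the three axioms for $\pi\colon\e\to\g$. Axiom~(i): $\pi({}^ge)=\pi([e_g,e])=[\pi(e_g),\pi(e)]=[g,\pi(e)]$, since $\pi$ is a Lie homomorphism. Axiom~(ii): for $e,e'\in\e$, choosing the lift of $\pi(e)$ to be $e$ itself gives ${}^{\pi(e)}e'=[e,e']$. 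Axiom~(iii): $\Ker\pi\seq\Z_\q(\e)$ means every $k\in\Ker\pi$ satisfies $\q k=0$, which is exactly the required condition. Hence $\pi$ is a $\q$-crossed $\g$-module.

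For the converse, suppose $\mu\colon\h\to\g$ is a $\q$-crossed $\g$-module. I first show $\Ker\mu\seq\Z_\q(\h)$: axiom~(iii) gives $\q k=0$ for $k\in\Ker\mu$, and for any $h'\in\h$ axiom~(ii) gives $[k,h']={}^{\mu(k)}h'={}^0h'=0$ (using $\La$-bilinearity of the action in the first variable), so $k\in\Z(\h)$ and therefore $k\in\Z_\q(\h)$; this is exactly the claim that $\Ker\mu$ is a $\q$-central ideal. If moreover $\mu$ is surjective, then $0\lra\Ker\mu\lra\h\overset{\mu}{\lra}\g\lra0$ is exact with $\Ker\mu\seq\Z_\q(\h)$, i.e.\ a $\q$-central extension (here the $\e$ of the statement is $\h$ itself).

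None of this is a genuine obstacle; the content is purely a matter of matching definitions, so the only thing to be careful about is the well-definedness of the action ${}^ge=[e_g,e]$, namely independence of the lift $e_g$. That follows because a different lift is $e_g+k$ with $k\in\Ker\pi\seq\Z(\e)$, so $[e_g+k,e]=[e_g,e]$. With that remark in place --- and it has in fact already been asserted just before the statement --- the proof is a short direct check, which is why the excerpt calls it immediate.
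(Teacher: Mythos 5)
Your proof is correct and is exactly the direct definitional verification that the paper has in mind when it states that the lemma ``follows immediately from Definition~\ref{Def_q crossed}'' (the paper supplies no written proof). All three axioms are checked correctly in the forward direction, the converse correctly uses $\La$-bilinearity of the action to get ${}^{0}h'=0$, and the remark on independence of the lift $e_g$ is the right point to flag.
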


\

The existence of universal $\q$-central extensions of Lie algebras was studied in \cite{Kh1}, where additionally they are described via the non-abelian exterior product modulo $\q$. This gives us another important example of $\q$-crossed modules of Lie algebras and will be presented below in this section.

%
%

First, we need to recall that, given a Lie algebra $\g$ and two crossed $\g$-modules $\m \to \g$ and $\n \to \g$,
the non-abelian $\q$-tensor and $\q$-exterior products, $\m \otimes ^{\q} \n$ and
$\n \wedge^{\q} \m$ are defined and studied in \cite{Kh1, Kh2}.
In this paper, we need just to use these notions for the identity $1_{\g} \colon \g {\to} \g$ and the inclusion $\h \hookrightarrow \g$ crossed $\g$-modules, where $\h$ is an ideal of $\g$.

\begin{definition} (\cite{Kh1, Kh2}) \emph{The non-abelian $\q$-tensor product} of a Lie algebra $\g$ and its ideal $\h$, for $\q\geq 1$,
is the Lie algebra $\h \otimes ^{\q} \g$ generated by symbols $h\otimes g$ and  $\{h\}$
with $g\in \g$, $h\in \h$ subject to the following relations for $h, h' \in \mathfrak h$, $g, g' \in \mathfrak g$ and $\lam, \lam' \in \La$:
\begin{align}
&\lam (h\otimes g) = \lam h\otimes g = h\otimes \lam g , \label{t1}
\\
&(h+h')\otimes g = h\otimes g + h'\otimes g , \label{t2}
\\
&h\otimes (g+g') = h\otimes g + h\otimes g' ,\label{t3}
\\
&[h, h']\otimes g = h \otimes [h', g] - h' \otimes [h, g] ,\label{t4}
\\
&h \otimes [g, g'] = [g', h]\otimes g - [g, h] \otimes  g',\label{t5}
\\
&[h\otimes g , h'\otimes g' ] = [h, g] \otimes [h', g'],\label{t6}
\\
&[\{h'\}, h\otimes g]=[\q h', h]\otimes g + h\otimes [\q h', g],\label{t7}
\\
&\{\lam h + \lam' h' \} =  \lam \{ h\} +  \lam' \{ h' \},\label{t8}
\\
&[\{h\}, \{h'\}] = \q h \otimes \q h' ,\label{t9}
\\
&\{[h, g]\} = \q (h\otimes g).\label{t10}
\end{align}

For $\q = 0$, we define $\h \otimes^0 g$ to be the non-abelian tensor product $\h \otimes \g$ of Lie algebras introduced by Ellis in \cite{El1} (see also \cite{InKhLa}), that is, $\h \otimes^0 \g$ is the Lie algebra generated by the symbols $h \otimes g$ for $h\in \h$, $g\in \g$ and subject to relations \eqref{t1}--\eqref{t6}.

Further, \emph{the non-abelian $\q$-exterior product}, $\h \wedge ^{\q} \g$, $\q \geq 0$, of $\g$ and its ideal $\h$ is the quotient of the Lie algebra $\h \otimes ^{\q} \g$ by the relation
\begin{equation}
h\otimes h = 0, \quad h\in \h. \label{t11}
\end{equation}
\end{definition}

Let us denote by $h\wedge g$ the coset of $h\otimes g$ in $\mathfrak h \wedge ^q \mathfrak g$.
There is a natural epimorphism of Lie algebras $\mathfrak h \otimes ^q \mathfrak g \to \mathfrak h \wedge ^q \mathfrak g$ given by
$h\otimes g \mapsto h\wedge g$ and $\{h\} \mapsto \{h\}$.

\begin{lemma} \label{lemma xi}
Let $\h$ be an ideal of a Lie algebra $\g$.
\begin{enumerate}
\item[i)] There is a Lie homomorphism
$
\xi^{\otimes} \colon \h \otimes^{\q}\g \to \g
$
defined on generators by
\[
\xi^{\otimes}(h\otimes g)=[h,g] \quad \text{and} \quad \xi^{\otimes}(\{h\})=\q h.
\]
 Moreover, $\xi^{\otimes}$ factors through the non-abelian $\q$-exterior product $\h \wedge^{\q}\g$, and there is an induced Lie homomorphism
 $ \xi^{\wedge} \colon \h \wedge^{\q}\g \to \g $.
\item[ii)] If $x\in \h \otimes^{\q}\g$ (resp. $x\in \h \wedge^{\q}\g$) then $\{\xi^{\otimes}(x)\}=\q x$ (resp. $\{\xi^{\wedge}(x)\}=\q x$).
\end{enumerate}
\end{lemma}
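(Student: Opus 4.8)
The plan is to verify claim (i) by checking that $\xi^{\otimes}$ is well-defined on generators --- i.e. that the assignment $h\otimes g\mapsto [h,g]$, $\{h\}\mapsto \q h$ respects each of the defining relations \eqref{t1}--\eqref{t10} --- and is a Lie homomorphism; then to observe that \eqref{t11} is sent to $[h,h]=0$, so $\xi^{\otimes}$ descends to $\xi^{\wedge}$. For claim (ii), the plan is to reduce to generators using that both $\{\,\cdot\,\}$-type and $\q(\,\cdot\,)$-type expressions behave additively and bracket-linearly in $x$, and then read off the identity from relations \eqref{t7}, \eqref{t9}, \eqref{t10}.

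First I would treat (i). Relations \eqref{t1}--\eqref{t3} and \eqref{t8} go to the obvious $\La$-bilinearity and additivity identities for $[~,~]$ and for $\q(\,\cdot\,)$. Relation \eqref{t4} maps to $[[h,h'],g]=[h,[h',g]]-[h',[h,g]]$, which is the Jacobi identity; \eqref{t5} is the Jacobi identity again after using antisymmetry, $[h,[g,g']]=[[g',h],g]-[[g,h],g']$. Relation \eqref{t6} maps to $[[h,g],[h',g']]=[[h,g],[h',g']]$, trivially true. For \eqref{t7}: the left side $[\{h'\},h\otimes g]$ goes to $[\q h',[h,g]]$, and the right side $[\q h',h]\otimes g+h\otimes[\q h',g]$ goes to $[[\q h',h],g]+[h,[\q h',g]]$, equal by Jacobi. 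For \eqref{t9}: $[\{h\},\{h'\}]\mapsto [\q h,\q h']$ while $\q h\otimes \q h'\mapsto [\q h,\q h']$, equal. For \eqref{t10}: $\{[h,g]\}\mapsto \q[h,g]$ and $\q(h\otimes g)\mapsto \q[h,g]$, equal. (All the bracket images actually lie in $\g$ because $\h$ is an ideal, which is what makes $[h,g]$, $[\q h',h]$ etc.\ meaningful.) Thus $\xi^{\otimes}$ is a well-defined Lie homomorphism, and since $\xi^{\otimes}(h\otimes h)=[h,h]=0$ it annihilates the extra relation \eqref{t11}, inducing $\xi^{\wedge}$ as claimed.

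For (ii), I would argue that the set of $x\in \h\otimes^{\q}\g$ satisfying $\{\xi^{\otimes}(x)\}=\q x$ is a Lie subalgebra containing all generators, hence is everything. It holds on generators: for $x=h\otimes g$ we need $\{[h,g]\}=\q(h\otimes g)$, which is exactly \eqref{t10}; for $x=\{h\}$ we need $\{\xi^{\otimes}(\{h\})\}=\{\q h\}=\q\{h\}$, and $\{\q h\}=\q\{h\}$ follows from \eqref{t8} with $\lam=\q$, $\lam'=0$. Additivity in $x$ is clear since $\xi^{\otimes}$, $\{\,\cdot\,\}$ (by \eqref{t8}) and $\q(\,\cdot\,)$ are all additive. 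For closure under brackets, suppose $\{\xi^{\otimes}(x)\}=\q x$ and $\{\xi^{\otimes}(y)\}=\q y$; we must show $\{\xi^{\otimes}([x,y])\}=\q[x,y]$, i.e.\ $\{[\xi^{\otimes}(x),\xi^{\otimes}(y)]\}=\q[x,y]$. Using \eqref{t9} in the form (already established) that $\{[a,b]\}=\q a\otimes \q b$ need not directly apply since $\xi^{\otimes}(x)\in\g$; instead I would use \eqref{t7}: $[\{h'\},z]=[\q h',-]\!\otimes\! g$-type relations let one rewrite $[\{\xi^{\otimes}(x)\},y]$, and combined with the inductive hypotheses $\{\xi^{\otimes}(x)\}=\q x$, this yields $\q[x,y]$ on the nose. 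Concretely, $\{[\xi^{\otimes}(x),\xi^{\otimes}(y)]\}$ should be massaged via \eqref{t10}/\eqref{t7} into $[\{\xi^{\otimes}(x)\},y]$ $=[\q x,y]=\q[x,y]$, using bracket-linearity of $\q(\,\cdot\,)$. The same argument runs verbatim in $\h\wedge^{\q}\g$ with $\xi^{\wedge}$ in place of $\xi^{\otimes}$.

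The main obstacle I anticipate is the bracket-closure step in (ii): making precise how \eqref{t7}, \eqref{t9} and \eqref{t10} combine to rewrite $\{[\xi^{\otimes}(x),\xi^{\otimes}(y)]\}$ as $[\{\xi^{\otimes}(x)\},y]$ requires some care about which factor the curly bracket ``lands on'' and in what order the relations are applied; one has to check that the manipulations are symmetric enough that the inductive hypothesis on $x$ (rather than needing it on $y$ too, or needing both) suffices. Everything else is a routine relation-by-relation verification.
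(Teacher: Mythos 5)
Your proposal is correct and takes essentially the same approach as the paper: a relation-by-relation check for (i), and for (ii) verification on generators followed by reduction of the general case to the defining relations (the paper only adds the preliminary remark that $\Img \xi^{\otimes}\subseteq \h$, which is needed for $\{\xi^{\otimes}(x)\}$ to be meaningful, and is otherwise even terser than you on the bracket-closure step). The step you flag does close: by \eqref{t10} one has $\{[\xi^{\otimes}(x),\xi^{\otimes}(y)]\}=\q\bigl(\xi^{\otimes}(x)\otimes\xi^{\otimes}(y)\bigr)$, and the bilinear identity $\q\bigl(\xi^{\otimes}(x)\otimes\xi^{\otimes}(y)\bigr)=\q[x,y]$ is checked on generators via \eqref{t6}, \eqref{t7}, \eqref{t9}, using \eqref{t5} together with the observation that $\q(a\otimes b+b\otimes a)=\{[a,b]+[b,a]\}=\{0\}=0$ for $a,b\in\h$, which follows from \eqref{t10} and \eqref{t8}.
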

\begin{proof}
i) It is readily seen that $\xi ^{\otimes}$ commutes with all defining relations \ref{t1}--\eqref{t11}.

\noindent ii)  First, note that $\Img \xi^{\otimes} \subseteq \h$ (resp. $\Img \xi^{\wedge} \subseteq \h$).

For the generators $h\otimes g, \{h\} \in \h \otimes^{\q}\g$, we have
\begin{align*}
&\{\xi^{\otimes}(h\otimes g)\} = \{[h,g]\}=\q (h\otimes g)  \quad \text{by \eqref{t10}},\\
&\{\xi^{\otimes}({\{h\}})\}=\{\q h\}=\q\{h\} \quad \text{by \eqref{t8}}.
\end{align*}
Now, if $x\in \h \otimes^{\q}\g$ (resp. $x\in \h \wedge^{\q}\g$ ) is any element,  the defining relations \eqref{t6}--\eqref{t9} guarantee that the identity
$\{\xi^{\otimes}(x)\}=\q x$ (resp. $\{\xi^{\wedge}(x)\}=\q x$) holds.
\end{proof}

\begin{proposition}\label{Prop 2.6}
There is a Lie action of $\g$ on $\h \otimes^{\q}\g$ (resp. $\h \wedge^{\q}\g$) given by
\begin{align*}
&{}^{g'}(h\otimes g)=[g', h]\otimes g + h\otimes [g',g], \quad {}^g\{h\}=\{[g,h]\}\\
(\text{resp.} \quad &{}^{g'}(h\wedge g)=[g', h]\wedge g + h\otimes [g',g], \quad {}^g\{h\}=\{[g,h]\}).
\end{align*}
Moreover, with this action the homomorphism $\xi^{\otimes} \colon \h \otimes^{\q}\g \to \g$ (resp. $\xi^{\wedge} \colon \h \wedge^{\q}\g \to \g$) is a $\q$-crossed $\g$-module.
\end{proposition}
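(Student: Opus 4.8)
The plan is to handle the tensor case completely and then descend to the exterior quotient. Throughout, write $\delta_{g'}$ for the candidate action of $g'\in\g$: the $\La$-linear map on $\h\otimes^{\q}\g$ sending $h\otimes g\mapsto [g',h]\otimes g+h\otimes[g',g]$ and $\{h\}\mapsto\{[g',h]\}$, extended over Lie brackets by the Leibniz rule. The first task is to show each $\delta_{g'}$ is well defined, i.e.\ compatible with the defining relations \eqref{t1}--\eqref{t10}. I would go relation by relation: \eqref{t1}--\eqref{t3} and \eqref{t8} use only $\La$-bilinearity, while for \eqref{t4}, \eqref{t5}, \eqref{t6}, \eqref{t7}, \eqref{t9}, \eqref{t10} one applies the Jacobi identity in $\g$ to split each $[g',[a,b]]$ as $[[g',a],b]+[a,[g',b]]$ and then re-applies the same relation on the right-hand side; in every case the resulting terms cancel in pairs. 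This makes each $\delta_{g'}$ a derivation of $\h\otimes^{\q}\g$.

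Next I would check that $g'\mapsto\delta_{g'}$ is $\La$-linear (clear on generators) and that $\delta_{[g',g'']}=\delta_{g'}\delta_{g''}-\delta_{g''}\delta_{g'}$. The latter identity is verified on the generators $h\otimes g$ and $\{h\}$ — where it again reduces to the Jacobi identity in $\g$ — and then extends to all of $\h\otimes^{\q}\g$ because both sides are derivations agreeing on generators. These two facts are precisely the two bulleted axioms of a Lie action, so $\g$ acts on $\h\otimes^{\q}\g$ as claimed.

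It remains to verify the three conditions of Definition~\ref{Def_q crossed} for $\mu=\xi^{\otimes}$. Condition (i), $\xi^{\otimes}({}^{g'}y)=[g',\xi^{\otimes}(y)]$, is checked on the generator $h\otimes g$ (where it is the Jacobi identity $[[g',h],g]+[h,[g',g]]=[g',[h,g]]$) and on $\{h\}$ (where it is immediate), then propagated to arbitrary $y$ using that $\xi^{\otimes}$ is a Lie homomorphism (Lemma~\ref{lemma xi}(i)) and that $\delta_{g'}$ is a derivation. Condition (iii) is immediate from Lemma~\ref{lemma xi}(ii): if $k\in\Ker\xi^{\otimes}$ then $\q k=\{\xi^{\otimes}(k)\}=\{0\}=0$ (and for $\q=0$ it is vacuous). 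The substantive point is condition (ii), ${}^{\xi^{\otimes}(y)}z=[y,z]$ for all $y,z$. Here I would first reduce to $y$ and $z$ being generators: both sides are $\La$-linear in $y$ and are derivations in $z$, and the set of $y$ satisfying the identity for all $z$ is closed under brackets, since $\xi^{\otimes}$ is a Lie homomorphism and the Lie-action axioms rewrite $[[y_1,y_2],z]$ as exactly the corresponding nested expression. This leaves four generator cases: the two with $y=\{h\}$ follow directly from \eqref{t1}, \eqref{t7}, \eqref{t9}, \eqref{t10}, while the two with $y=h\otimes g$ require \eqref{t6} (to produce $[h,g]\otimes[h',g']$), the Jacobi identity, and the auxiliary identity $[m,g]\otimes h'=-\,h'\otimes[m,g]$ valid for $m\in\h$, which is itself a short consequence of \eqref{t4} and \eqref{t5}.

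Finally, for the exterior product I would observe that $\delta_{g'}(h\otimes h)=[g',h]\otimes h+h\otimes[g',h]$ lies in the ideal defining $\h\wedge^{\q}\g$: since $[g',h]\in\h$ and the relation $h\wedge h=0$ forces $a\wedge b=-\,b\wedge a$ for $a,b\in\h$ by polarization, this element maps to $0$ in $\h\wedge^{\q}\g$. Hence the action descends, and the entire previous paragraph transports along the canonical epimorphism $\h\otimes^{\q}\g\to\h\wedge^{\q}\g$ (using the exterior version of Lemma~\ref{lemma xi}(ii) for condition (iii)), so that $\xi^{\wedge}$ is a $\q$-crossed $\g$-module. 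I expect the only genuine obstacle to be condition (ii): the four generator cases are fiddly, and the $y=h\otimes g$ cases close up only once one isolates the identity $[m,g]\otimes h'=-\,h'\otimes[m,g]$ and matches it against \eqref{t6}.
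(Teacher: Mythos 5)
Your argument is correct, and on the only point for which the paper supplies an actual argument it coincides with the paper's: the published proof delegates both the well-definedness of the action and crossed-module axioms (i)--(ii) to earlier results of \cite{Kh1} (Proposition 1.14 and Corollary 1.15 there), and verifies only the torsion condition (iii), doing so exactly as you do via Lemma~\ref{lemma xi}(ii), namely $\q k=\{\xi^{\otimes}(k)\}=\{0\}=0$ for $k\in\Ker\xi^{\otimes}$. What you add is a self-contained verification of everything the paper imports, and that verification is sound. In particular, the one genuinely delicate step you isolate checks out: for $m,h'\in\h$ and $g\in\g$, relation \eqref{t4} gives $h'\otimes[m,g]=m\otimes[h',g]-[m,h']\otimes g$, while relation \eqref{t5} (applied with second slot $h'\in\h\subseteq\g$) gives $m\otimes[h',g]=[g,m]\otimes h'+[m,h']\otimes g$, so $h'\otimes[m,g]=-[m,g]\otimes h'$; this is what makes your four generator cases of condition (ii) close up, and the same identity (or the polarization of \eqref{t11}) handles the descent of the derivations $\delta_{g'}$ to $\h\wedge^{\q}\g$. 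The trade-off is the expected one: the paper's proof is short but not self-contained, whereas yours reproves the content of \cite{Kh1} for this special case at the cost of a long relation-by-relation check.
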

\begin{proof}
The verification that the given equations define the Lie action  of $\g$ on $\h \otimes^{\q}\g$  (resp. $\g$ on $\h \wedge^{\q}\g$) and that with this action $\xi^{\otimes}$ (resp. $\xi^{\wedge}$) is a crossed $\g$-module, follows as a particular case from \cite[Proposition 1.14]{Kh1} (resp. \cite[Corollary 1.15]{Kh1}). At the same time, for all $x\in \Ker \xi^{\otimes}$ (resp. $x\in \Ker \xi^{\wedge}$ ), by Lemma \ref{lemma xi} (ii), we get
\begin{align*}
&\q x =\{\xi^{\otimes}(x)\}=\{0\}=0\\
( \text{resp.} \quad &\q x =\{\xi^{\wedge}(x)\}=\{0\}=0 ).
\end{align*}
This completes the proof.
\end{proof}

Let us note that $\Img \xi^{\otimes}=\Img \xi^{\wedge}$ and it is a submodule of $\h$ generated by the elements $[h, g]$ and $\q h'$ for $h, h'\in \h$,
$g \in \g$. We denote it by $\h \#_{\q} \g$. In fact, $\h \#_{\q} \g$ is an ideal of $\g$.
In particular, $\g\#_{\q} \g$ is an ideal of $\g$.

Recall that $\g$ is called \emph{$\q$-perfect Lie algebra}, if $\g = \g\#_{\q} \g$. The following result is a particular case of \cite[Theorem 2.8]{Kh1}.
\begin{remark}
If $\g$ is a $\q$-perfect Lie algebra, then $\g \otimes^{\q}\g=\g \wedge^{\q}\g$ and the $\q$-crossed module $\xi^{\otimes} \colon \g \otimes^{\q}\g \to \g$ is the universal $\q$-central extension of $\g$.
\end{remark}

\section{Further properties of the non-abelian $q$-tensor and $q$-exterior products}\label{S:additional}

\begin{proposition}\label{abelian1} Let $\g$ be an abelian Lie algebra, i.e. $[\g, \g]=0$.
Then $\g \otimes^{\q} \g $ (resp. $\mathfrak g \wedge^q \mathfrak g $) is an abelian Lie algebra and there is an isomorphism:
\[
\g \otimes^{\q}\g \cong  \g \times
\big((\g/ \q\g) \otimes_{\La_{\q}}(\g/\q\g)\big)
\]
\[
(\text{resp.} \quad \g \wedge^{\q}\g \cong  \g \times
\big((\g/{\q}\g) \wedge_{\La_{\q}}(\g/{\q}\g)\big) \ ).
\]

\end{proposition}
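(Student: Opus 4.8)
The plan is to exhibit mutually inverse Lie algebra homomorphisms between $\g\otimes^{\q}\g$ and $\g\times\big((\g/\q\g)\otimes_{\La_{\q}}(\g/\q\g)\big)$, after extracting a few consequences of $[\g,\g]=0$. First I would note that in the abelian case relation \eqref{t10} reads $\q(g\otimes g')=\{[g,g']\}=\{0\}=0$ (the last equality by \eqref{t8}), so by \eqref{t1} also $\q g\otimes g'=g\otimes\q g'=0$; hence the $\La$-submodule $A$ of $\g\otimes^{\q}\g$ generated by the symbols $g\otimes g'$ is annihilated by $\q$, i.e. is an $\La_{\q}$-module. Next, all Lie brackets of generators vanish: by \eqref{t6}, $[g\otimes g',h\otimes h']=[g,g']\otimes[h,h']=0$; by \eqref{t7}, $[\{g'\},g\otimes g]=[\q g',g]\otimes g+g\otimes[\q g',g]=0$; and by \eqref{t9} together with the previous observation, $[\{g\},\{g'\}]=\q g\otimes\q g'=\q\big(\q(g\otimes g')\big)=0$. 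So $\g\otimes^{\q}\g$ is abelian, and as an $\La$-module it equals $A+B$, where $B$ is the submodule generated by the symbols $\{g\}$, which by \eqref{t8} is a quotient of $\g$ via $g\mapsto\{g\}$. (For $\q=0$ there are no generators $\{g\}$ and no relations \eqref{t7}--\eqref{t10}, so the argument degenerates to the classical fact $\g\otimes^{0}\g\cong\g\otimes_{\La}\g$; below I take $\q\ge1$.)

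For the forward map I would define $\phi\colon\g\otimes^{\q}\g\to\g\times\big((\g/\q\g)\otimes_{\La_{\q}}(\g/\q\g)\big)$ on generators by $\phi(g\otimes g')=(0,\bar g\otimes\bar g')$ and $\phi(\{g\})=(g,0)$, and verify it respects the presentation. Relations \eqref{t1}--\eqref{t3} and \eqref{t8} map to the additivity and bilinearity relations of the target. For \eqref{t4}--\eqref{t5}, both sides are $\La$-combinations of tensors $a\otimes b$ in which $a$ or $b$ is a bracket in $\g$, hence reduces to $0$ in $\g/\q\g$, so $\phi$ sends both sides to $(0,0)$. For \eqref{t6}, \eqref{t7}, \eqref{t9} the left side is a Lie bracket in $\g\otimes^{\q}\g$, sent to $0$ because the target is abelian, while the right side is killed for the same reasons (tensors with a bracket factor, or $\overline{\q g}=0$). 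Finally \eqref{t10} becomes $(0,0)=(0,\q(\bar g\otimes\bar g'))$, true since the second factor is an $\La_{\q}$-module. Thus $\phi$ is a well-defined Lie homomorphism.

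For the backward map I would define $\psi$ on the first factor by $(g,0)\mapsto\{g\}$, which is $\La$-linear by \eqref{t8}, and on the second factor through the assignment $(g,g')\mapsto g\otimes g'$: by \eqref{t1}--\eqref{t3} this is $\La$-bilinear, and by the first paragraph it takes values in the $\La_{\q}$-module $A$, so it factors uniquely through $(\g/\q\g)\otimes_{\La_{\q}}(\g/\q\g)$ by the universal property of the tensor product over $\La_{\q}$. Since $\g\otimes^{\q}\g$ is abelian, these two $\La$-linear maps assemble into a Lie homomorphism $\psi$ on the direct sum. Comparing images of generators gives $\psi\phi=\id$ and $\phi\psi=\id$, so $\phi$ is an isomorphism. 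For the exterior product, $\g\wedge^{\q}\g$ is the quotient of $\g\otimes^{\q}\g$ by the relations \eqref{t11}, i.e. $g\otimes g=0$, which under $\phi$ correspond exactly to $(0,\bar g\otimes\bar g)=0$; dividing the target by these replaces $(\g/\q\g)\otimes_{\La_{\q}}(\g/\q\g)$ by $(\g/\q\g)\wedge_{\La_{\q}}(\g/\q\g)$ and leaves the factor $\g$ untouched, and $\g\wedge^{\q}\g$ is abelian as a quotient of an abelian Lie algebra; this yields the second isomorphism.

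I expect the only genuinely delicate step to be the well-definedness of $\psi$ on the tensor factor, namely that $(g,g')\mapsto g\otimes g'$ descends to $(\g/\q\g)\otimes_{\La_{\q}}(\g/\q\g)$; this is precisely where one must first have isolated the identity $\q(g\otimes g')=0$ coming from \eqref{t10}. Everything else is routine bookkeeping through the defining relations, kept short by the fact that both algebras in play are abelian so that all Lie brackets are automatically sent to $0$.
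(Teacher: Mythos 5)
Your proposal is correct and follows essentially the same route as the paper: the paper defines the pair of retractions $\pi_1,\pi_2$ (your $\phi$) and sections $\sigma_1,\sigma_2$ (your $\psi$) on the two families of generators $\{g\}$ and $g\otimes h$, and verifies $\pi_1\sigma_1=1$, $\pi_2\sigma_2=1$ and $\sigma_1\pi_1+\sigma_2\pi_2=1$, which is exactly your pair of mutually inverse homomorphisms. The only difference is that you spell out the well-definedness checks (in particular the key identity $\q(g\otimes g')=\{[g,g']\}=0$ that makes the tensor factor an $\La_{\q}$-module) which the paper leaves as ``one easily checks.''
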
\label{abelian2}
\begin{proof} From the defining relations, it is easy to see that $\g \otimes^{\q} \g $ is abelian. Moreover,
\[
\q(h \otimes  g)=\{[h, g]\}=\{0\}=0,
\]
for each $g, h\in \mathfrak g$. Therefore, there is the following diagram of (abelian) Lie algebras

\begin{equation*}
\xymatrix @=20mm {
	\g \ar@<-1.1ex>[r]_{\sigma_1 \ \ } & \g \otimes^{\q}\g \ar@<-1.1ex>[l]_{\pi_1 \ \  } \ar@<1.1ex>[r]^{\pi_2 \ \ \ \ \ \ } & (\g/{\q}\mathfrak g) \otimes_{\La_{\q}}(\g/{\q}\g)\ar@<1.1ex>[l]^{\sigma_2 \ \ \ \ \ \ },
}
\end{equation*}
where
\begin{align*}
&\sigma_1(g)= \{g\};\\
&\pi_1\{g\} = g, \ \pi_1(g\otimes h) = 0;\\
&\sigma_2\big((g+\q\g)\otimes (h+\q\g)\big) = g \otimes h;\\
&\pi_2( g\otimes h) = (g+\q\g)\otimes (h+\q\g), \ \pi_2\{g\}= 0.
\end{align*}
One easily checks that $\pi_1\sigma_1 =1_{\g}$,
$\pi_2\sigma_2 = 1_{ (\g/{\q}\g) \otimes_{\La_{\q}}(\g/{\q}\g)}$ and
$\pi_1\sigma_1 +\pi_2\sigma_2 =1_{\g \otimes^{\q} \g}$.
Hence the first isomorphism holds. The proof is essentially the same for the second isomorphism.
\end{proof}

\begin{proposition}\label{prop1} Given a Lie algebra $\g$ and its ideal $\h$, the canonical sequence of Lie algebras
\[
\h \wedge^{\q} \g \lra \g \wedge^{\q} \g \lra (\g / \h) \wedge^{\q} (\g/ \h) \lra 0 ,
\]
is exact.
\end{proposition}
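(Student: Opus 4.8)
The plan is to construct the two maps explicitly on generators, show they are well-defined Lie homomorphisms, verify exactness at the middle and right terms, and surjectivity on the right. First I would define $\alpha\colon \h \wedge^{\q}\g \to \g\wedge^{\q}\g$ by $\alpha(h\wedge g)=h\wedge g$ and $\alpha(\{h\})=\{h\}$; since the defining relations \eqref{t1}--\eqref{t11} for $\h\wedge^{\q}\g$ form a subset of those for $\g\wedge^{\q}\g$ (with $\h\subseteq\g$), and since the $\g$-action used in forming $\h\wedge^{\q}\g$ restricts the one on $\g\wedge^{\q}\g$, the map $\alpha$ respects all relations and is a Lie homomorphism. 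Next I would define $\beta\colon \g\wedge^{\q}\g \to (\g/\h)\wedge^{\q}(\g/\h)$ on generators by $\beta(g\wedge g')=\bar g\wedge\bar g'$ and $\beta(\{g\})=\{\bar g\}$, where $\bar{\phantom{g}}$ denotes the projection $\g\to\g/\h$; again one checks that each defining relation for $\g\wedge^{\q}\g$ maps to the corresponding relation in $(\g/\h)\wedge^{\q}(\g/\h)$, using that $\g\to\g/\h$ is a Lie homomorphism compatible with the actions. Surjectivity of $\beta$ is immediate, since the generators $\bar g\wedge\bar g'$ and $\{\bar g\}$ are hit.

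For exactness at $\g\wedge^{\q}\g$, one inclusion is clear: $\beta\alpha=0$ because $\beta(h\wedge g)=\bar h\wedge\bar g=0\wedge\bar g=0$ (as $\bar h=0$ for $h\in\h$) and $\beta(\{h\})=\{\bar h\}=\{0\}=0$, so $\Img\alpha\subseteq\Ker\beta$. The reverse inclusion $\Ker\beta\subseteq\Img\alpha$ is the main obstacle. The natural strategy is to show that $\beta$ induces an isomorphism $(\g\wedge^{\q}\g)/\Img\alpha \xrightarrow{\sim} (\g/\h)\wedge^{\q}(\g/\h)$ by exhibiting an inverse. To do this I would define $\gamma\colon (\g/\h)\wedge^{\q}(\g/\h)\to (\g\wedge^{\q}\g)/\Img\alpha$ by $\gamma(\bar g\wedge\bar g')=\overline{g\wedge g'}$ and $\gamma(\{\bar g\})=\overline{\{g\}}$, and check this is well-defined: if $\bar g_1=\bar g_2$, i.e. $g_1-g_2=h\in\h$, then $g_1\wedge g'-g_2\wedge g'=(g_1-g_2)\wedge g'=h\wedge g'\in\Img\alpha$ by relation \eqref{t2}, and similarly for $\{g\}$ using \eqref{t8}; the Lie algebra relations for $(\g/\h)\wedge^{\q}(\g/\h)$ then lift modulo $\Img\alpha$ because each such relation, read in $\g\wedge^{\q}\g$, differs from its lift by terms involving a factor in $\h$ (coming from the bracket relations \eqref{t4}, \eqref{t5}, \eqref{t7}, \eqref{t10} where $\h$ is an ideal) — here one must verify carefully that $\Img\alpha$ is stable under the $\g$-action and is an ideal of $\g\wedge^{\q}\g$, which follows from Proposition~\ref{Prop 2.6} together with the explicit action formulas. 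Then $\gamma$ and the map induced by $\beta$ are mutually inverse on generators, hence $\Ker\beta=\Img\alpha$.

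I expect the bookkeeping in the last step — checking that $\Img\alpha$ is an ideal and that $\gamma$ is well-defined on all ten families of relations — to be the only real work; everything else is formal manipulation of generators and relations. An alternative, slicker route, if available from the cited references, would be to use right-exactness of the functor $-\wedge^{\q}-$ applied to the short exact sequence $0\to\h\to\g\to\g/\h\to 0$ regarded as crossed $\g$-modules; but since the excerpt only records the $q$-tensor and $q$-exterior products for the identity and inclusion crossed modules, I would present the hands-on argument above rather than invoke a general functoriality statement.
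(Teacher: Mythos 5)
Your proposal is correct, and it is essentially the argument the paper intends: the paper's proof simply cites Ellis's $\q=0$ case and asserts the general case is ``similar,'' and that standard proof is precisely the generators-and-relations argument you spell out, namely checking that $\alpha$ and $\beta$ respect the defining relations, that $\Img\alpha$ is an ideal (using antisymmetry in the exterior product and the fact that $\h$ is an ideal of $\g$), and constructing the inverse $\gamma$ on the quotient to get $\Ker\beta=\Img\alpha$. You are in effect supplying the details the paper leaves to the reader, so no comparison of approaches is needed.
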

\begin{proof} For $\q=0$, this exact sequence was obtained in \cite{El1}, and the proof for any $\q$ is similar.
\end{proof}

We have the following lemma by combining  \cite[Proposition 1.2]{El3} and \cite[Lemma 2.3]{Kh2}.
\begin{lemma}\label{lemma1} If $\q=0$, or $\q\geq 1$, and $\La$ is a $\q$-torsion-free ground ring, then for any free Lie algebra $\f$, the homomorphism $\xi^{\wedge} \colon \f \wedge^{\q} \f \to \f$ induces an isomorphism
$\f \wedge^{\q} \f \cong \f \#_{\q} \f$.
\end{lemma}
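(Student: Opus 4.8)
The plan is to exploit the semidirect-type description of $\f\wedge^{\q}\f$ obtained by mimicking the group-theoretic argument behind Proposition~\ref{abelian1}. First I would note that by Lemma~\ref{lemma xi}(i) the map $\xi^{\wedge}\colon \f\wedge^{\q}\f\to\f$ has image exactly $\f\#_{\q}\f$, so it suffices to prove that $\xi^{\wedge}$ is injective when $\f$ is free and $\La$ is $\q$-torsion-free (or $\q=0$). The standard way to do this is to construct a splitting: one builds a Lie homomorphism $\f\#_{\q}\f\to \f\wedge^{\q}\f$ (equivalently, after invoking the universal property, a suitable ``section'' defined on the free generators) whose composite with $\xi^{\wedge}$ is the identity, and then shows that under the hypotheses this section is also surjective, forcing $\xi^{\wedge}$ to be an isomorphism.

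Concretely, I would proceed as follows. By \cite[Lemma 2.3]{Kh2} the result is already known in the case $\q=0$: for a free Lie algebra $\f$, the exterior square $\f\wedge\f$ maps isomorphically onto $[\f,\f]=\f\#_0\f$. For $\q\geq 1$ I would use the defining relations \eqref{t1}--\eqref{t11} of $\f\wedge^{\q}\f$ together with \cite[Proposition 1.2]{El3}, which handles the analogous group statement, to reduce to understanding the ``extra'' generators $\{h\}$ and the relation \eqref{t9}. The key computation is that, because $\La$ is $\q$-torsion-free, multiplication by $\q$ on $\f$ is injective, so the elements $\q h$ behave like a faithful copy of $\f$; combined with relation \eqref{t10}, $\{[h,g]\}=\q(h\wedge g)$, this lets one express every $\{h'\}$-contribution in terms of the $h\wedge g$ part, and relation \eqref{t9}, $[\{h\},\{h'\}]=\q h\wedge \q h'$, becomes a consequence rather than an independent constraint. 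One then checks that $\xi^{\wedge}$ restricted to the subalgebra generated by the $h\wedge g$ is injective onto $\f\#_{\q}\f$ by exhibiting the inverse on generators $[h,g]\mapsto h\wedge g$, $\q h\mapsto \{h\}$, and verifying this is well defined precisely because $\f$ is free (so there are no hidden relations among the $[h,g]$) and $\q$-torsion-free.

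I expect the main obstacle to be showing that the candidate inverse map $\f\#_{\q}\f\to\f\wedge^{\q}\f$ is \emph{well defined}: a priori there could be $\La$-linear or Lie-theoretic relations among the generators $[h,g]$ and $\q h'$ of $\f\#_{\q}\f\subseteq \f$ that are not reflected by relations in $\f\wedge^{\q}\f$, and ruling this out is exactly where freeness of $\f$ and $\q$-torsion-freeness of $\La$ must both be used. The cleanest route is probably not to build the inverse by hand but to cite the free-case computation of \cite[Proposition 1.2]{El3} for the relevant graded pieces and transport it, together with the $\q=0$ statement of \cite[Lemma 2.3]{Kh2}, so that the torsion hypothesis enters only through the injectivity of $\q\cdot(-)$ on $\f$; the rest is then a routine diagram chase with the relations \eqref{t7}--\eqref{t10}. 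I would also double-check the edge behaviour when $\q\La=0$ is excluded by hypothesis, so that $\La_{\q}$ and the torsion-free assumption interact as expected, and confirm that no degeneracy of $\f\wedge^{\q}\f$ occurs in low ``dimension''.
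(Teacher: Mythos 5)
The paper offers no argument for this lemma at all: it is stated as an immediate combination of \cite[Proposition 1.2]{El3} (the case $\q=0$, i.e.\ $\f\wedge\f\cong[\f,\f]$ for a free Lie algebra $\f$) and \cite[Lemma 2.3]{Kh2} (the case $\q\geq 1$ over a $\q$-torsion-free ring). Your instinct to fall back on these two references is therefore exactly what the paper does, but you have the attributions reversed: \cite{El3} is Ellis's 1987 paper on non-abelian exterior products \emph{of Lie algebras} (not ``the analogous group statement''), and its Proposition 1.2 \emph{is} the $\q=0$ case; the $\q\geq 1$ case is precisely \cite[Lemma 2.3]{Kh2}, not something you re-derive from the $\q=0$ case by inspecting the extra generators. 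As written, your reduction ``use \cite[Proposition 1.2]{El3} \dots to reduce to understanding the extra generators $\{h\}$'' cites the wrong result for the wrong purpose, and your opening claim that \cite[Lemma 2.3]{Kh2} gives the $\q=0$ statement is also off.

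More importantly, the portion of your argument that is not a citation has a genuine gap, and it sits exactly where you say you expect it: the well-definedness of the candidate inverse $\f\#_{\q}\f\to\f\wedge^{\q}\f$, $[h,g]\mapsto h\wedge g$, $\q h\mapsto\{h\}$. The submodule $\f\#_{\q}\f$ is generated by elements $[h,g]$ and $\q h$ satisfying many $\La$-linear identities inside $\f$ (for instance $\q[h,g]=[\q h,g]$ already forces $\{[h,g]\}=\q(h\wedge g)$, which is relation \eqref{t10}; one must check that \emph{all} such coincidences are forced by \eqref{t1}--\eqref{t11}), and verifying this is the entire content of the lemma. It is not ``a routine diagram chase,'' and ``there are no hidden relations among the $[h,g]$ because $\f$ is free'' is false as stated: bilinearity and the Jacobi identity alone produce nontrivial relations among brackets; freeness only controls \emph{which} relations occur, via the weight grading of $\f$. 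Likewise the assertion that relation \eqref{t9} ``becomes a consequence rather than an independent constraint'' is stated, not proved. A self-contained proof would have to carry out the computation of \cite[Lemma 2.3]{Kh2} (which uses the grading of the free Lie algebra and $\q$-torsion-freeness of $\La$ to split off the $\{h\}$-part as a faithful copy of $\f/[\f,\f]$ glued along $\q\f$); short of that, the honest proof is the one the paper gives, namely the bare citation, and the intermediate constructions in your write-up add unverified steps without closing the argument.
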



To prove one of our main results - Theorem \ref{T:t_e_centers}, we will use the description of the kernel of the canonical projection $\g \otimes^{\q} \g \twoheadrightarrow \g \wedge^{\q} \g$ via Whitehead's universal quadratic functor  $\Gamma (-)$. Let us recall from \cite{SiTy} that it is defined for any $\La$-module $A$ to be the $\La$-module $\Gamma (A)$ generated by symbols $\gamma (a)$, for each $a\in A$, subject to the relations
\begin{align*}
&\lam^2\gamma (a)=\gamma (\lam a), \\
&\gamma (a+b+c) + \gamma (a) + \gamma (b) + \gamma (c) = \gamma (a+b) + \gamma (a+c) + \gamma (b+c), \\
&\gamma (\lam a+b) + \lam\gamma (a) + \lam\gamma (b) = \lam\gamma (a+b) + \gamma (\lam a) + \gamma (b),
\end{align*}
for each $\lam \in \La$, $a, b, c \in A$.

\begin{theorem}\label{theorem1} There is a well-defined homomorphism of $\La$-modules
\[
i \colon \Gamma \big ( \g / (\g \#_{\q} \g)\big) \to \g \otimes^{\q} \g, \quad \text{given by} \quad i\big(\gamma (g+ \g \#_{\q} \g) \big) = g\otimes g,
\]
 and the following conditions hold:
\begin{enumerate}
\item[(i)] The sequence
\[
\Gamma \big ( \g / (\g \#_{\q} \g)\big) \overset{i}{\longrightarrow}
 \g\otimes^{\q} \g \longrightarrow
 \g\wedge^{\q} \g  \longrightarrow 0
\]
is exact;

\item[(ii)] Consider $\Gamma \big ( \g / (\g \#_{\q} \g)\big)$ as an abelian Lie algebra.
Then $i$ is a homomorphism of Lie algebras;

\item[(iii)] If $\g / (\g \#_{\q} \g)$ is a free $\La_{\q}$-module, then $i$ is a monomorphism.
\end{enumerate}

\end{theorem}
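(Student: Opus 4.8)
The plan is to build the map $i$ from $\Gamma$ of the quotient by using the universal property of $\Gamma(-)$, then establish exactness (i), compatibility with Lie structure (ii), and injectivity in the free case (iii) in that order.

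First I would address well-definedness of $i$. Set $Q = \g/(\g\#_\q\g)$, which is an $\La_\q$-module since $\q\g \subseteq \g\#_\q\g$. I want a $\La$-module map $\Gamma(Q)\to \g\otimes^\q\g$, so by the universal property of $\Gamma$ it suffices to exhibit a ``quadratic map'' $Q \to \g\otimes^\q\g$, i.e.\ to check that $g\mapsto g\otimes g$ descends to $Q$ and satisfies the three defining relations of $\Gamma$. The bilinearity relations \eqref{t1}--\eqref{t3} give $(g+g')\otimes(g+g') = g\otimes g + g\otimes g' + g'\otimes g + g'\otimes g'$, and from \eqref{t4} (or directly \eqref{t5}) one sees $g\otimes g' + g'\otimes g = [g,g']\otimes(\text{something})$-type terms lying in the image of $\xi$, hence in $\g\#_\q\g$-related pieces; more precisely, the polarization $g\otimes g' + g'\otimes g$ is $2$-torsion-free-controlled and is exactly what the $\Gamma$-relations are designed to absorb. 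I would verify that if $g\in\g\#_\q\g$ then $g\otimes g = 0$ in $\g\otimes^\q\g$: writing $g = \sum[h_i,x_i] + \sum \q h_j'$ and expanding $g\otimes g$ via \eqref{t1}--\eqref{t5}, \eqref{t7}, \eqref{t10}, each term either vanishes or is a bracket of two tensor generators which by \eqref{t6} reduces appropriately. This is the routine-but-delicate bookkeeping step. Then the three $\Gamma$-relations are checked on $g\mapsto g\otimes g$ using the same bilinearity relations, giving the factorization $i$.

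For (i), surjectivity of $\g\otimes^\q\g \to \g\wedge^\q\g$ is immediate from the definition of the $\q$-exterior product as a quotient, and the kernel is generated (as an ideal, but here everything will be central enough) by the elements $h\otimes h$ for $h\in\g$, i.e.\ by relation \eqref{t11}. So I must show $\Img i$ equals this kernel. The inclusion $\Img i \subseteq \Ker(\g\otimes^\q\g\to\g\wedge^\q\g)$ is clear since $i(\gamma(\bar g)) = g\otimes g \mapsto g\wedge g = 0$. For the reverse inclusion I would show that the subgroup generated by all $h\otimes h$ coincides with the set of elements of the form $\sum_k i(\gamma(\bar g_k))$; the point is that $h\otimes h$ depends only on $h \bmod \g\#_\q\g$ by the vanishing established above, so the assignment $\bar h \mapsto h\otimes h$ is exactly the quadratic map whose linearization is $i$, and the subgroup it generates is precisely $\Img i$. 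Part (ii): since $\g\otimes^\q\g$ has $\Img i$ landing in a piece on which the bracket \eqref{t6} evaluates to $[g,g]\otimes[g,g] = 0\otimes 0 = 0$ (using $[g,g]=0$), the image is an abelian subalgebra and $i$ viewed from the abelian Lie algebra $\Gamma(Q)$ is a Lie homomorphism — I would just note $[i(x),i(y)]=0=i([x,y])$.

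The main obstacle is (iii): injectivity of $i$ when $Q$ is a free $\La_\q$-module. Here the strategy is to construct a left inverse (a retraction) $r\colon \g\otimes^\q\g \to \Gamma(Q)$. Using freeness of $Q$, choose a basis and a set-theoretic section $s\colon Q \to \g$; I would try to define $r$ on generators by $r(g\otimes g') = $ the ``symmetric $\Gamma$-part'' of the images of $g,g'$ in $Q$ — concretely, using the polarization identity $\gamma(a+b)-\gamma(a)-\gamma(b)$ which is bilinear in $a,b$ — and $r(\{h\}) = 0$. The freeness is exactly what lets one split off the $\Gamma$-summand: over a free module, $\Gamma$ behaves like the divided square and the symmetric square sequence splits, so the would-be formula for $r$ respects all the defining relations \eqref{t1}--\eqref{t11} (the bracket relations \eqref{t6}, \eqref{t7}, \eqref{t9} map to $0$ because $\Gamma(Q)$ is abelian and the relevant brackets land in $\g\#_\q\g$, hence die in $Q$; the relations \eqref{t4},\eqref{t5},\eqref{t10} also involve brackets and are handled similarly). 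Checking $r\circ i = \id$ on $\gamma(\bar g)$ then reduces to $r(g\otimes g) = \gamma(\bar g)$, which is the defining property of the polarization-based formula. I expect verifying that $r$ is well-defined against relation \eqref{t4} and \eqref{t5} to be where one must use freeness most carefully, since those relations mix $\h$- and $\g$-slots; but since in our situation $\h=\g$ and the right-hand sides are brackets landing in $\g\#_\q\g$, they vanish in $Q$ and cause no trouble.
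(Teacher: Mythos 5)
Your outline diverges from the paper's proof at two points, and at both of them the step you defer as routine is either aimed at the wrong target or skips the one relation that actually matters.

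For well-definedness you propose to check that $g\otimes g=0$ whenever $g\in\g\#_{\q}\g$. That is not the condition you need: $g\mapsto g\otimes g$ is quadratic, not linear, so for it to descend to $\g/(\g\#_{\q}\g)$ you must show $(g+k)\otimes(g+k)=g\otimes g$ for \emph{every} $g\in\g$ and $k\in\g\#_{\q}\g$, and the entire content lies in the cross terms $g\otimes k+k\otimes g$, which your check does not address. For $k=[h,h']$ this is Ellis's $\q=0$ computation, which the paper simply cites; the genuinely new case is $k=\q h$, where the paper's argument is
\[
\q(g\otimes h)+\q(h\otimes g)+\q^{2}(h\otimes h)=\{[g,h]\}+\{[h,g]\}+\q\{[h,h]\}=\{[g,h]+[h,g]+\q[h,h]\}=\{0\}=0,
\]
using \eqref{t10} and then \eqref{t8}. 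This interplay with the symbols $\{-\}$ is the mechanism that makes the mod $\q$ statement work, and it is absent from your sketch. Parts (i) and (ii) are fine; for (i) you correctly note the needed observation that the ideal generated by the squares equals the submodule they generate, via $a\otimes b+b\otimes a=(a+b)\otimes(a+b)-a\otimes a-b\otimes b$.

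For (iii) you take a genuinely different route: the paper maps $\g\otimes^{\q}\g$ onto $\big(\g/(\g\#_{\q}\g)\big)\otimes^{\q}\big(\g/(\g\#_{\q}\g)\big)$, splits the latter using Proposition~\ref{abelian1}, and quotes \cite[Proposition 17]{El1}, whereas you build a retraction $r\colon\g\otimes^{\q}\g\to\Gamma\big(\g/(\g\#_{\q}\g)\big)$ from a basis via polarization. A retraction could in principle work, but the verifications you flag as delicate (\eqref{t4}, \eqref{t5}) are in fact trivial --- both sides die because brackets land in $\g\#_{\q}\g$ --- while the relation you never mention, \eqref{t10}, is the dangerous one: it forces $\q(h\otimes g)=\{[h,g]\}$, so your $r$ (which kills all $\{-\}$) must satisfy $\q\,r(h\otimes g)=0$. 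The off-diagonal part of any polarization formula is $\q$-torsion, because the polar form $\gamma(a+b)-\gamma(a)-\gamma(b)$ is bilinear and $\g/(\g\#_{\q}\g)$ is a $\La_{\q}$-module; but the diagonal contribution (which you cannot avoid if $r(g\otimes g)=\gamma(\bar g)$ is to hold) consists of terms $\lam_\alpha\mu_\alpha\gamma(e_\alpha)$, and $\q\gamma(e_\alpha)$ is not obviously zero in $\Gamma$ as defined over $\La$ (compare $\Gamma(\mathbb{Z}/2)\cong\mathbb{Z}/4$ for $\La=\mathbb{Z}$, $\q=2$). Until you settle this point your $r$ is not well defined, so the injectivity argument is incomplete; this is precisely the subtlety that the paper's reduction to the abelian quotient is designed to isolate.
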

\begin{proof} (i) Let $\g \otimes \g$ denote the non-abelian tensor square of $\g$ \cite{El1}.
Then, there is a well-defined homomorphism $\Gamma \big ( \g / [\g, \g]\big) \to \g \otimes  \g$,
$\gamma (g + [\g, \g]) \mapsto g\otimes g$, \cite{El1}. Therefore, to show that $i$ also is well defined, it suffices
to check that $i\big(\gamma (g + \q h)\big) = i\big(\gamma (g)\big)$ for each $g, h \in \g$. Indeed,
\begin{align*}
i\big(\gamma (g + q h)\big) &= (g + q h) \otimes (g + q h) \\
& =g \otimes g + q (g\otimes h) + q (h \otimes g) + q^2 (h \otimes h) \\
&=g \otimes g + \{[g, h]\} + \{[h, g]\} + q \{[h, h]\} \\
& =g \otimes g + \{[g, h] + [h, g]+ q[h, h]\}=g \otimes g = i\big(\gamma(g)\big) .
\end{align*}

(ii) We must show that $i\Big (\Gamma \big ( \g / (\g \#_{\q} \g)\big) \Big)$ is a Lie algebra with a
trivial Lie bracket. Indeed,
\[
[i(\gamma (g)), i(\gamma (h))]=[g\otimes g, h\otimes h] = [g, g]\otimes [h, h] =0,
\]
for each $g, h \in \g$.

(iii) Consider the following commutative diagram
\begin{equation*}
\xymatrix@+10pt{
 \Gamma\big(\g/(\g\#_{\q}\g)\big) \ \ar@{->}[r]
\ar@{=}[d]_{ }
 &\g \otimes ^{\q} \g
\ar@{->}[d]
 \\
  \Gamma\big(\g/(\g\#_{\q}\g)\big) \ar@{->}[r]
  &\big(\g/(\g\#_{\q}\g)\big)\otimes^{\q}\big(\g/(\g\#_{\q}\g)\big) ,
}\end{equation*}
where the right vertical arrow is the natural homomorphism. By Proposition \ref{abelian1}, we have
\[
\big(\g/(\g\#_{\q}\g)\big)\otimes^{\q} \big(\g/(\g\#_{\q}\g)\big)\cong
\big(\g/(\g\#_{\q}\g)\big) \times \Big(\big(\g/(\g\#_{\q}\g)\big)
\otimes_{\La_{\q}} \big(\g/(\g\#_{\q}\g)\big) \Big).
\]
Therefore, since $\g / (\g \#_{\q} \g)$ is a free $\La_{\q}$-module, the lower horizontal arrow is a
monomorphism \cite[Proposition 17]{El1}. Hence, the upper horizontal arrow is a monomorphism too.
\end{proof}

\




Given an epimorphism of Lie algebras $\pa \colon \f \twoheadrightarrow \g$, one easily observes that the induced homomorphism
\[
\bar{\pa} \colon \f/ (\Ker(\pa) \#_{\q} \f)  \twoheadrightarrow \g
\]
is a ${\q}$-central extension of Lie algebras. In particular, if $\pa \colon \f \twoheadrightarrow \g$
is a free presentation of $\g$, i.e. $\f$ is a free Lie algebra,  then
$\bar\pa \colon \f / (\Ker(\pa) \#_{\q} \f)\twoheadrightarrow \g$
will be called \emph{the ${\q}$-central extension induced by this free presentation}.

\begin{lemma}\label{lemma2}
Suppose that the ground ring $\La$ is a $\q$-torsion-free ring.

Let
 $0\longrightarrow \rr \longrightarrow \f \overset{\pa}{\longrightarrow} \g \longrightarrow 0$ be a free presentation of a Lie algebra $\g$.
Denote $\cc=\f/(\rr\#_{\q} \f)$. Let $\bar{\pa} \colon \cc \to \g$ be the $\q$-central extension induced by the given free
 presentation of $\g$ . Then there is a natural isomorphism of Lie algebras
\[
\cc\#_{\q} \cc\overset{\cong}{\longrightarrow} \g \wedge ^{\q} \g, \quad \text{given by} \quad
[c_1, c_2]\mapsto \bar{\pa}(c_1)\wedge \bar{\pa}(c_2)\quad \text{and} \quad \q c \mapsto \{ \bar{\pa}(c)\}.
\]
\end{lemma}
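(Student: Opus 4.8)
The plan is to construct the isomorphism $\cc\#_{\q}\cc \to \g\wedge^{\q}\g$ and its inverse, and verify they are mutually inverse Lie homomorphisms. First I would establish that the claimed formulas on generators of $\cc\#_{\q}\cc$ actually define a well-defined Lie homomorphism. Recall $\cc\#_{\q}\cc$ is the ideal of $\cc$ generated by brackets $[c_1,c_2]$ and elements $\q c$; by Lemma~\ref{lemma1} applied to the free Lie algebra $\f$ (using the $\q$-torsion-free hypothesis on $\La$), we have $\f\wedge^{\q}\f\cong\f\#_{\q}\f$, and the quotient map $\f\to\cc=\f/(\rr\#_{\q}\f)$ together with Proposition~\ref{prop1} gives a surjection $\f\wedge^{\q}\f\to\g\wedge^{\q}\g$ whose kernel is the image of $\rr\wedge^{\q}\f$. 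The map $\f\#_{\q}\f\cong\f\wedge^{\q}\f\twoheadrightarrow\g\wedge^{\q}\g$ sends $[x,y]\mapsto\pa(x)\wedge\pa(y)$ and $\q x\mapsto\{\pa(x)\}$; I would check its kernel is exactly $\rr\#_{\q}\f$ (equivalently, that $\rr\wedge^{\q}\f$ maps onto the relevant part), so it descends to a well-defined homomorphism $\cc\#_{\q}\cc\to\g\wedge^{\q}\g$ with the stated effect on generators.

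For the reverse direction I would use the universal property of $\g\wedge^{\q}\g$: define a map $\g\wedge^{\q}\g\to\cc\#_{\q}\cc$ on generators by $g\wedge g'\mapsto[\tilde g,\tilde g']$ and $\{g\}\mapsto\q\tilde g$, where $\tilde g\in\cc$ is any lift of $g$ along $\bar\pa$. Well-definedness requires two things: independence of the choice of lift, and compatibility with the defining relations \eqref{t1}--\eqref{t11}. Independence of lift follows because two lifts differ by an element of $\Ker\bar\pa=\rr/(\rr\#_{\q}\f)$, which is $\q$-central in $\cc$ (as $\bar\pa$ is a $\q$-central extension), so its brackets with anything in $\cc$ vanish and its $\q$-multiple vanishes; hence $[\tilde g,\tilde g']$ and $\q\tilde g$ are unchanged. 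Checking the relations is then a matter of transporting each relation of the $\q$-tensor/$\q$-exterior presentation into $\cc$ using the action of $\g$ on $\cc$ induced by the Lie bracket of $\cc$ (recall $\cc$ acts on itself, and this factors through $\bar\pa$ on the $\q$-central kernel); for instance \eqref{t4} becomes the Jacobi-type identity $[[\tilde h,\tilde h'],\tilde g]=[\tilde h,[\tilde h',\tilde g]]-[\tilde h',[\tilde h,\tilde g]]$ in $\cc$, \eqref{t10} becomes $\q[\tilde h,\tilde g]=\q$ of the appropriate element, and \eqref{t11} becomes $[\tilde g,\tilde g]=0$.

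Finally I would check that the two homomorphisms are mutually inverse: composing one way sends $[c_1,c_2]\mapsto\bar\pa(c_1)\wedge\bar\pa(c_2)\mapsto[\widetilde{\bar\pa(c_1)},\widetilde{\bar\pa(c_2)}]$, and since $\widetilde{\bar\pa(c_i)}$ differs from $c_i$ by something in the $\q$-central kernel, this equals $[c_1,c_2]$; similarly $\q c\mapsto\{\bar\pa(c)\}\mapsto\q\widetilde{\bar\pa(c)}=\q c$. The other composite is immediate on generators of $\g\wedge^{\q}\g$. Both composites being the identity on generating sets, and both maps being Lie homomorphisms, they are the identity.

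The main obstacle I anticipate is the well-definedness of the reverse map $\g\wedge^{\q}\g\to\cc\#_{\q}\cc$: one must carefully verify compatibility with \emph{all} the relations \eqref{t1}--\eqref{t11}, and in particular relations \eqref{t7}, \eqref{t9}, \eqref{t10} involving the symbols $\{h\}$ and the factor $\q$ genuinely use the $\q$-central nature of $\Ker\bar\pa$ and the $\q$-torsion-freeness of $\La$ (to ensure, e.g., that $\q\tilde g$ is well-defined modulo the kernel and that $[\q\tilde h,\q\tilde h']$ computes correctly). A secondary subtlety is confirming that the kernel of $\f\#_{\q}\f\to\g\wedge^{\q}\g$ is precisely $\rr\#_{\q}\f$ and not something larger, which is where Proposition~\ref{prop1} together with Lemma~\ref{lemma1} does the essential work.
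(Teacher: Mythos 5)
Your first paragraph is exactly the paper's argument: Proposition~\ref{prop1} gives $(\f\wedge^{\q}\f)/i(\rr\wedge^{\q}\f)\cong\g\wedge^{\q}\g$, Lemma~\ref{lemma1} (using $\q$-torsion-freeness and freeness of $\f$) gives the isomorphism $\xi^{\wedge}\colon\f\wedge^{\q}\f\cong\f\#_{\q}\f$ carrying $i(\rr\wedge^{\q}\f)$ onto $\rr\#_{\q}\f$, and $\cc\#_{\q}\cc\cong(\f\#_{\q}\f)/(\rr\#_{\q}\f)$. Once the kernel of $\f\#_{\q}\f\to\g\wedge^{\q}\g$ is identified as exactly $\rr\#_{\q}\f$, the descended map is already bijective, so your second and third paragraphs (the explicit inverse via the universal property and the verification of relations \eqref{t1}--\eqref{t11}) are correct but redundant; the paper stops after chaining the three isomorphisms.
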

\begin{proof} We have an isomorphism
$\cc\#_{\q}\cc\cong (\f\#_{\q} \f)/(\rr\#_{\q} \f)$.
By Proposition \ref{prop1} we get the exact sequence of Lie algebras
 $\rr\wedge^{\q}\f\overset{i}{\longrightarrow} \f\wedge^{\q}\f\longrightarrow
\g\wedge^{\q}\g\longrightarrow 0$ and so
$(\f\wedge^{\q}\f)/{i(\rr\wedge^{\q}\f)}\cong \g\wedge^{\q}\g$.
On the other hand, since  $\f$ is a free Lie algebra, by Lemma \ref{lemma1},  we have the isomorphism
$\xi^\wedge \colon \f\wedge^{\q}\f\overset{\cong}{\longrightarrow} \f\#_{\q} \f$
 and $\xi^\wedge i (\rr\wedge^{\q}\f)= \rr\#_{\q} \f$. Then we get
\[
\cc\#_{\q}\cc\cong (\f\#_{\q} \f)/(\rr\#_{\q} \f)\cong
(\f\wedge^{\q}\f)/{i(\rr\wedge^{\q}\f)}\cong \g\wedge^{\q}\g .
\]
\end{proof}

\section{$\q$-capability of Lie algebras}\label{S:CapL}

 In complete analogy to the notion of capable Lie algebras, we formulate the following definition of $\q$-capable Lie algebras.
\begin{definition}\label{Def q-capable}
A Lie algebra $\g$ is called \emph{$\q$-capable} if there exists a Lie algebra $\m$ such that
\[
\g \cong \m / \Z_{\q}(\m).
\]
\end{definition}

It is well-known that the notion of capable Lie algebra is related to inner derivations. Namely, a Lie algebra is capable if and only if it is isomorphic to the Lie algebra of inner derivations of a Lie algebra. Below we examine a similar result for the modulo $\q$ setting. Before, we need to recall some notations.

Let $\m$ be a Lie algebra. \emph{A derivation} of $\m$ is a linear self-map $d \colon \m \to \m$ such that $d[m,m']=[d(m),m']+[m,d(m')]$ for all $m,m'\in \m$. For example, for any $m\in \m$, the map $d_m \colon \m\to \m$, given by $d_m(m')=[m,m']$ is a derivation, called \emph{the inner derivation} defined by $m\in \m$.

Denote by $\Der(\m)$ the set of all derivations of $\m$. It is a Lie algebra with the bracket $[d,d']=dd' - d'd$. The set of all inner derivations $\IDer(\m)=\{d_m \mid m\in \m \}$ is an ideal of $\Der(\m)$, and it is called \emph{the Lie algebra of inner derivations}.

Given a Lie algebra $\m$, let us consider the Lie algebra structure on $\Der(\m)\times \q\m$ given by
\[
[(d, \q m), (d', \q m')]=\big([d,d'], \q[m,m']\big).
\]
It is easy to check that there is a Lie action of $\Der(\m)\times \q\m$ on $\m$ defined by
\[
{}^{(d, \q m)}m' = d (m').
\]
There is a Lie homomorphism
\[
\pa \colon \m \lra \Der(\m)\times \q\m, \quad \pa(m) = (d_m, \q m),
\]
the image of which is the Lie subalgebra
\[
\IDer(\m, \q)=\{(d_m, \q m) \mid m\in \m \}
\]
of $\Der(\m)\times \q\m$,  and the kernel is the $\q$-center $\Z_{\q}(\m)$ of $\m$. We call $\IDer(\m, \q)$ \emph{the Lie algebra of inner $\q$-derivations}.

\begin{proposition}\label{Prop q-crossed}
Given a Lie algebra $\m$, there is a short exact sequence of Lie algebras
\[
0\lra \Z_{q}({\m}) \lra \m \overset{\pa}{\lra} \IDer(\m, \q) \lra 0,
\]
 where $\pa$ is a $\q$-crossed $\IDer(\m, \q)$-module with the Lie action of  $\IDer(\m, \q)$ on $\m$ given by
\begin{equation}\label{action inner}
{}^{(d_m, \q m)}m' = [m,m'].
\end{equation}
\end{proposition}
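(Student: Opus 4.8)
The plan is to verify directly that the data exhibited before the statement satisfy all three axioms of a $\q$-crossed module, plus exactness of the sequence. First I would record what needs checking: we have the Lie homomorphism $\pa \colon \m \to \IDer(\m,\q)$, $\pa(m) = (d_m, \q m)$, with $\IDer(\m,\q)$ acting on $\m$ via $^{(d_m,\q m)}m' = [m,m']$; one must confirm this is a well-defined Lie action (i.e. independent of the representative $m$ used to express an element $(d_m,\q m)$ of $\IDer(\m,\q)$, and satisfying the two action identities), then check crossed-module conditions (i), (ii), (iii) of Definition~\ref{Def_q crossed}, and finally that $0 \to \Z_\q(\m) \to \m \overset{\pa}{\to} \IDer(\m,\q) \to 0$ is exact.

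For well-definedness of the action: if $(d_m, \q m) = (d_{m_1}, \q m_1)$ then $d_{m-m_1} = 0$ and $\q(m - m_1) = 0$, so $m - m_1 \in \Z_\q(\m)$, whence $[m,m'] = [m_1,m']$ for all $m'$; so $^{(d_m,\q m)}m'$ is unambiguous. The first action identity $^{[(d_m,\q m),(d_{m_1},\q m_1)]}m' = {}^{(d_m,\q m)}({}^{(d_{m_1},\q m_1)}m') - {}^{(d_{m_1},\q m_1)}({}^{(d_m,\q m)}m')$ unwinds, using the bracket on $\Der(\m)\times\q\m$ and $d_{[m,m_1]} = [d_m,d_{m_1}]$, to the Jacobi identity $[[m,m_1],m'] = [m,[m_1,m']] - [m_1,[m,m']]$; the second identity $^{(d_m,\q m)}[m',m''] = [{}^{(d_m,\q m)}m',m''] + [m',{}^{(d_m,\q m)}m'']$ is exactly that $d_m$ is a derivation. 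Condition (i), $\pa({}^{(d_m,\q m)}m') = [(d_m,\q m),\pa(m')]$, reads $(d_{[m,m']}, \q[m,m']) = ([d_m,d_{m'}], \q[m,m'])$, true again by $d_{[m,m']} = [d_m,d_{m'}]$. Condition (ii), $^{\pa(m')}m'' = [m',m'']$, is immediate from the definition of the action since $\pa(m') = (d_{m'},\q m')$. Condition (iii) requires $\q k = 0$ for $k \in \Ker\pa$; but $\Ker\pa = \{m : d_m = 0,\ \q m = 0\} = \Z_\q(\m)$ by construction, and every element of $\Z_\q(\m)$ is annihilated by $\q$ by definition of the $\q$-center.

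Exactness of the sequence is then read off: $\pa$ is surjective onto $\IDer(\m,\q)$ by the very definition of that algebra as $\Img\pa$; and $\Ker\pa = \Z_\q(\m)$ as computed above, since $d_m = 0$ says $m \in \Z(\m)$ and the extra condition $\q m = 0$ then places $m$ in $\Z_\q(\m)$, while conversely any $m \in \Z_\q(\m)$ has $d_m = 0$ and $\q m = 0$. I do not anticipate a genuine obstacle here; the statement is essentially a packaging result. The only point demanding care is the well-definedness of the action on $\IDer(\m,\q)$, because an element of that algebra is written as $(d_m,\q m)$ but $m$ is not determined by it — one must check the action depends only on the pair, not on $m$, which is where the $\q$-center (rather than the ordinary center) enters and is precisely what makes the $\q$-crossed module axioms, rather than ordinary crossed module axioms, the right ones.
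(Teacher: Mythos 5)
Your proposal is correct and follows essentially the same route as the paper: a direct verification that $\pa$ satisfies conditions (i) and (ii) of Definition~\ref{Def_q crossed} via $d_{[m,m']}=[d_m,d_{m'}]$, with exactness and condition (iii) following from $\Ker\pa=\Z_\q(\m)$; you are in fact more thorough than the paper, which leaves the action axioms and exactness as routine checks. (One minor quibble: well-definedness of the action only requires $d_m=d_{m_1}$, i.e.\ ordinary centrality of $m-m_1$, so your closing remark that the $\q$-center is what makes the action well defined overstates its role there --- the $\q$-center genuinely enters only in the kernel computation and axiom (iii).)
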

\begin{proof}
It is easy to check that the sequence is a short exact sequence of Lie algebras, and \eqref{action inner} indeed defines a Lie action.
 To show that $\pa$ is a $\q$-crossed module, we only need to verify the first two conditions of Definition \ref{Def_q crossed}.

Thus, we will show that $\pa(^{(d_m, \q m)}m')=[(d_m, m), \pa(m')]$. In fact, we have
 \begin{align*}
 \pa(^{(d_m, \q m)}m')&=\pa([m,m'])= (d_{[m,m']}, \q [m,m'])=([d_m,d_{m'}], \q [m,m']) \\ & = [(d_m, \q m), (d_{m'}, \q m')]  = [(d_m, m), \pa(m')].
  \end{align*}
 Here we have used the obvious equality $d_{[m,m']}=[d_m,d_{m'}]$ for inner derivations.
And finally, we see that
 \begin{align*}
 {}^{\pa({m})} m'={}^{(d_m, \q m)}m'=[m,m'].
  \end{align*}

\end{proof}

\begin{corollary}\label{Cor q-der}
A Lie algebra is $\q$-capable if and only if it is isomorphic to the Lie algebra of inner $\q$-derivations $\IDer(\m, \q)$ for some Lie algebra $\m$.
\end{corollary}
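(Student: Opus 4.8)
The plan is to derive Corollary~\ref{Cor q-der} directly from Proposition~\ref{Prop q-crossed} together with Definition~\ref{Def q-capable}, establishing both implications separately.

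\textbf{The ``only if'' direction.} Suppose $\g$ is $\q$-capable. By Definition~\ref{Def q-capable}, there is a Lie algebra $\m$ with $\g \cong \m/\Z_{\q}(\m)$. By Proposition~\ref{Prop q-crossed}, the homomorphism $\pa \colon \m \to \IDer(\m,\q)$ has kernel $\Z_{\q}(\m)$ and image $\IDer(\m,\q)$, so it induces an isomorphism $\m/\Z_{\q}(\m) \overset{\cong}{\lra} \IDer(\m,\q)$. Composing with the given isomorphism yields $\g \cong \IDer(\m,\q)$, as required.

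\textbf{The ``if'' direction.} Suppose $\g \cong \IDer(\m,\q)$ for some Lie algebra $\m$. Again by Proposition~\ref{Prop q-crossed}, we have the short exact sequence $0 \to \Z_{\q}(\m) \to \m \overset{\pa}{\to} \IDer(\m,\q) \to 0$, which gives $\IDer(\m,\q) \cong \m/\Z_{\q}(\m)$. Hence $\g \cong \m/\Z_{\q}(\m)$, so $\g$ is $\q$-capable by Definition~\ref{Def q-capable} (with the same witnessing Lie algebra $\m$).

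This argument is essentially a one-line unpacking of the preceding proposition, so I do not anticipate any genuine obstacle; the only point requiring a word of care is the elementary fact that a Lie homomorphism with kernel $K$ and image $I$ induces an isomorphism of the source modulo $K$ onto $I$ (first isomorphism theorem for Lie algebras), which is standard. Everything else — that $\pa$ has the stated kernel and image — has already been recorded in the discussion preceding and within Proposition~\ref{Prop q-crossed}.
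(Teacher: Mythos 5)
Your proof is correct and is precisely the argument the paper intends: the corollary is stated as an immediate consequence of Proposition~\ref{Prop q-crossed}, whose short exact sequence gives $\m/\Z_{\q}(\m)\cong \IDer(\m,\q)$ via the first isomorphism theorem, and both implications follow at once. Nothing is missing.
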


\begin{definition} \label{Def q-exterior} Let $\g$ be a Lie algebra. The $\q$-\emph{exterior center} of $\g$ is defined by
\[
\Z^{\wedge}_{\q}(\g ) = \Big \{  g \in \g \mid  \{g\}=0_{\g \wedge^{\q} \g} \:\text{and} \: g\wedge x=0_{\g \wedge^{\q} \g}
\:\text{for each}\: x\in \g   \Big \}.
\]
\end{definition}

\begin{lemma}\label{cor1} Let $\bar{\pa} \colon \cc \twoheadrightarrow \g$ be a $\q$-central extension induced by a free
 presentation of $\g$. Then $c\in \Z_{\q}(\cc)$ if and only if $\bar{\pa}(c)\in \Z^{\wedge}_{\q}(\g)$.
\end{lemma}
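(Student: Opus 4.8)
The plan is to use Lemma~\ref{lemma2}, which gives a natural isomorphism $\cc\#_{\q}\cc\xrightarrow{\cong}\g\wedge^{\q}\g$ sending $[c_1,c_2]\mapsto\bar\pa(c_1)\wedge\bar\pa(c_2)$ and $\q c\mapsto\{\bar\pa(c)\}$, and to translate the defining conditions of $\Z^{\wedge}_{\q}(\g)$ into conditions on $\cc$ under this isomorphism. Note that here $\La$ is implicitly the $\q$-torsion-free ground ring required by Lemma~\ref{lemma2}; I would state this hypothesis (or assume it is in force). First I would fix notation: write $\rr=\Ker\pa$ for the free presentation $\f\twoheadrightarrow\g$, so $\cc=\f/(\rr\#_{\q}\f)$ and $\bar\pa$ has kernel $\rr/(\rr\#_{\q}\f)$, which is $\q$-central in $\cc$.

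The key observation is that both $\Z_{\q}(\cc)$-membership and $\Z^{\wedge}_{\q}(\g)$-membership are detected inside the \emph{same} object once we apply Lemma~\ref{lemma2}. Concretely, fix $c\in\cc$ and suppose first $\bar\pa(c)\in\Z^{\wedge}_{\q}(\g)$. Then $\{\bar\pa(c)\}=0$ in $\g\wedge^{\q}\g$ and $\bar\pa(c)\wedge\bar\pa(c')=0$ for all $c'\in\cc$ (since $\bar\pa$ is onto). Transporting back along the isomorphism of Lemma~\ref{lemma2}, this says $\q c=0$ and $[c,c']=0$ for all $c'\in\cc$ — but these two statements are exactly $c\in\Z(\cc)$ together with $\q c=0$, i.e. $c\in\Z_{\q}(\cc)$. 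Conversely, if $c\in\Z_{\q}(\cc)$, then $[c,c']=0$ and $\q c=0$ in $\cc$, hence $\q c=0$ and $[c,c']=0$ inside the submodule $\cc\#_{\q}\cc$, and applying the isomorphism forward gives $\{\bar\pa(c)\}=0$ and $\bar\pa(c)\wedge\bar\pa(c')=0$ for all $c'$; since $\bar\pa$ is surjective the latter holds for all $x\in\g$, so $\bar\pa(c)\in\Z^{\wedge}_{\q}(\g)$.

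The one subtlety to handle carefully — and what I expect to be the main obstacle — is that membership in $\Z^{\wedge}_{\q}(\g)$ involves the elements $\{\bar\pa(c)\}$ and $\bar\pa(c)\wedge x$ sitting in $\g\wedge^{\q}\g$, whereas the isomorphism of Lemma~\ref{lemma2} has codomain $\g\wedge^{\q}\g$ but is only asserted to send the \emph{generators} $[c_1,c_2]$ and $\q c$ of $\cc\#_{\q}\cc$ to $\bar\pa(c_1)\wedge\bar\pa(c_2)$ and $\{\bar\pa(c)\}$. So I must check that $\{\bar\pa(c)\}$ and $\bar\pa(c)\wedge x$ actually do lie in the image of $\cc\#_{\q}\cc$ (they do: $\{\bar\pa(c)\}$ is such a generator, and $\bar\pa(c)\wedge x=\bar\pa(c)\wedge\bar\pa(c')$ for any lift $c'$ of $x$), and that the isomorphism restricted appropriately is genuinely a bijection between $\{\,c\in\cc : [c,\cc]=0,\ \q c=0\,\}$ and $\{\,g : \{g\}=0,\ g\wedge\g=0\,\}$ at the level of these distinguished elements. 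This is just bookkeeping with the explicit formulas, but it is where one could slip: one needs that $[c,c']=0$ in $\cc$ is \emph{equivalent} to $[c,c']=0$ as an element of $\cc\#_{\q}\cc$, which is immediate since $\cc\#_{\q}\cc$ is a submodule of $\cc$, and likewise $\q c=0$ in $\cc$ iff $\q c=0$ in $\cc\#_{\q}\cc$. Assembling these equivalences through the isomorphism of Lemma~\ref{lemma2} yields the claim.
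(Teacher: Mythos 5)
Your proposal is correct and follows essentially the same route as the paper: both arguments transport the conditions $[c,x]=0$ and $\q c=0$ in $\cc$ across the isomorphism $\cc\#_{\q}\cc\cong\g\wedge^{\q}\g$ of Lemma~\ref{lemma2}, using surjectivity of $\bar{\pa}$ to cover all of $\g$. Your explicit remark that vanishing in the submodule $\cc\#_{\q}\cc$ is equivalent to vanishing in $\cc$ is exactly the (implicit) point the paper's proof relies on, so there is no gap.
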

\begin{proof} If $c\in \Z_{\q}(\cc)$,  then ${\q} c =0$ and $[c, x]=0$ for each $x\in \cc$. By Lemma~\ref{lemma2},
$\bar{\pa}(c) \wedge \bar{\pa}(x) =0_{\g \wedge^{\q} \g}$ and
$\{\bar{\pa}(c)\}=0_{\g \wedge^{\q} \g}$. Since $\bar{\pa}$ is an epimorphism,
$\bar{\pa}(c)\in Z^{\wedge}_{\q}(\g)$.

Conversely, if $\bar{\pa}(c)\in  \Z^{\wedge}_{\q}(\g)$, then $[c, x]=0$ for each $x\in \g$ because the isomorphic
image of $[c, x]$ is $0_{\g \wedge^{\q} \g}$. Moreover
$\{\bar{\pa}(c)\}=0_{\g \wedge^{\q} \g}$. Hence, by the same argument $\q c=0$. Thus,  $c\in \Z_{\q}(\cc)$.
\end{proof}

\begin{theorem}\label{theorem2}
A Lie algebra $\g$ is $\q$-capable if and only if $\Z^{\wedge}_{\q}(\g)=0$.
\end{theorem}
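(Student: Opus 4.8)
The plan is to prove the two implications separately: sufficiency of the condition $\Z^{\wedge}_{\q}(\g)=0$ follows quickly from Lemma~\ref{cor1}, while necessity rests on a factorization argument built from Proposition~\ref{prop1} and Lemma~\ref{lemma xi}.

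\emph{Necessity} ($\g$ $\q$-capable $\Rightarrow$ $\Z^{\wedge}_{\q}(\g)=0$). Suppose $\g\cong\m/\Z_{\q}(\m)$ and let $\pi\colon\m\twoheadrightarrow\g$ be the canonical projection. The first (and essentially only nontrivial) step is to construct a Lie homomorphism $\psi\colon\g\wedge^{\q}\g\to\m$ with $\psi\big(\pi(m_1)\wedge\pi(m_2)\big)=[m_1,m_2]$ and $\psi\big(\{\pi(m)\}\big)=\q m$ for all $m,m_1,m_2\in\m$. To obtain it, apply Proposition~\ref{prop1} to the ideal $\Z_{\q}(\m)$ of $\m$: the sequence
\[
\Z_{\q}(\m)\wedge^{\q}\m\lra\m\wedge^{\q}\m\overset{\pi\wedge\pi}{\lra}\g\wedge^{\q}\g\lra 0
\]
is exact, so $\Ker(\pi\wedge\pi)$ equals the image of $\Z_{\q}(\m)\wedge^{\q}\m$. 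Since the generators $z\wedge m$ and $\{z\}$ of $\Z_{\q}(\m)\wedge^{\q}\m$ (with $z\in\Z_{\q}(\m)$) are sent by $\xi^{\wedge}\colon\m\wedge^{\q}\m\to\m$ (Lemma~\ref{lemma xi}) to $[z,m]=0$ and $\q z=0$ respectively, $\xi^{\wedge}$ vanishes on $\Ker(\pi\wedge\pi)$; as $\pi\wedge\pi$ is surjective, $\xi^{\wedge}$ factors as $\psi\circ(\pi\wedge\pi)$, and using functoriality of the $\q$-exterior product together with Lemma~\ref{lemma xi}(i) one reads off the displayed formulas for $\psi$.

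Now take $g\in\Z^{\wedge}_{\q}(\g)$ and choose $m\in\m$ with $\pi(m)=g$. For every $m'\in\m$ we get $[m,m']=\psi\big(g\wedge\pi(m')\big)=0$, because $g\wedge x=0_{\g\wedge^{\q}\g}$ for all $x\in\g$ and $\pi$ is onto; hence $m\in\Z(\m)$. Likewise $\q m=\psi\big(\{g\}\big)=0$ since $\{g\}=0_{\g\wedge^{\q}\g}$. Thus $m\in\Z_{\q}(\m)$, and therefore $g=\pi(m)=0$, proving $\Z^{\wedge}_{\q}(\g)=0$.

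\emph{Sufficiency} ($\Z^{\wedge}_{\q}(\g)=0$ $\Rightarrow$ $\g$ $\q$-capable). Choose a free presentation $0\lra\rr\lra\f\overset{\pa}{\lra}\g\lra 0$, put $\cc=\f/(\rr\#_{\q}\f)$, and let $\bar\pa\colon\cc\twoheadrightarrow\g$ be the $\q$-central extension it induces. Being $\q$-central, $\bar\pa$ satisfies $\Ker\bar\pa\subseteq\Z_{\q}(\cc)$; conversely, if $c\in\Z_{\q}(\cc)$ then Lemma~\ref{cor1} gives $\bar\pa(c)\in\Z^{\wedge}_{\q}(\g)=0$, so $c\in\Ker\bar\pa$. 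Hence $\Z_{\q}(\cc)=\Ker\bar\pa$ and $\cc/\Z_{\q}(\cc)\cong\g$, i.e.\ $\g$ is $\q$-capable with witness $\m=\cc$ (as in Lemma~\ref{cor1}, this direction uses that $\La$ is $\q$-torsion-free). I expect the construction of $\psi$ in the necessity part — more precisely, the inclusion $\Ker(\pi\wedge\pi)\subseteq\Ker\xi^{\wedge}$ — to be the one point that needs real care; everything else is formal once Proposition~\ref{prop1}, Lemma~\ref{lemma xi} and Lemma~\ref{cor1} are in hand.
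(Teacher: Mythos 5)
Your proof is correct, and while the sufficiency direction coincides with the paper's argument, your necessity direction takes a genuinely different route. The paper proves necessity by chasing through free presentations: from $\g\cong\m/\Z_{\q}(\m)$ it builds a free presentation of $\m$, composes to get one of $\g$, forms $\cc=\f/(\rr\#_{\q}\f)$, shows $\Z_{\q}(\cc)=\Ker\bar\pa$ via the induced triangle, and then invokes Lemma~\ref{cor1} to conclude $\Z^{\wedge}_{\q}(\g)=\bar\pa(\Z_{\q}(\cc))=0$. You instead construct the homomorphism $\psi\colon\g\wedge^{\q}\g\to\m$ directly, by observing that $\xi^{\wedge}\colon\m\wedge^{\q}\m\to\m$ kills the generators $z\wedge m\mapsto[z,m]=0$ and $\{z\}\mapsto\q z=0$ of the image of $\Z_{\q}(\m)\wedge^{\q}\m$, which by Proposition~\ref{prop1} is exactly $\Ker(\pi\wedge\pi)$; the induced $\psi$ then immediately forces any lift of an element of $\Z^{\wedge}_{\q}(\g)$ into $\Z_{\q}(\m)$. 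This is sound: the image of $\Z_{\q}(\m)\wedge^{\q}\m$ is the Lie subalgebra generated by those elements, so a Lie homomorphism vanishing on them vanishes on the whole image. What your route buys is that the necessity direction avoids Lemma~\ref{cor1} (hence Lemmas~\ref{lemma1} and~\ref{lemma2}) entirely, and therefore does not need the $\q$-torsion-freeness of $\La$ there --- a hypothesis the paper's argument does implicitly rely on in both directions; you correctly flag that it is still needed for sufficiency. What the paper's uniform treatment buys is that the same $\cc$-construction and Lemma~\ref{cor1} serve both implications and are reused almost verbatim for Theorem~\ref{theoremE1}.
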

\begin{proof}
First, suppose that $\Z^{\wedge}_{\q}(\g)=0$. Consider a $\q$-central extension $\bar{\pa} \colon \cc \twoheadrightarrow \g$
 induced by a free presentation of $\g$. We claim that $\Ker \bar{\pa} =\Z_{\q}(\cc)$. Indeed, since
 $\bar{\pa} \colon \cc \twoheadrightarrow \g$ is a $\q$-central extension, we have $\Ker \bar{\pa} \subseteq \Z_{\q}(\cc)$.
To see the inverse inclusion, we note that given any $x\in \Z_{\q}(\cc)$, by Lemma  \ref{cor1}
$\bar{\pa}(x)\in \Z_{\q}^{\wedge}(\g)=0$ and so $x\in \Ker \bar{\pa}$. Hence, $\g \cong \cc / \Z_{\q}(\cc)$, i.e. $\g$ is capable.

Conversely, let $\g$ be a $\q$-capable Lie algebra. Then there is a Lie algebra $\m$ such that
$\m/\Z_{\q}(\m)\cong \mathfrak{g}$. In other words, there is an epimorphism of Lie algebras
$\pi \colon \m\twoheadrightarrow \g$ such that $\Ker \pi =\Z_{\q}(\m)$.  Consider a free presentation
$0\to \s \to \f \overset{\tau}{\to} \m\to 0$ of $\m$. Then we get a free presentation
$0\to \rr\to \f \overset{\pa}{\to} \g\to 0$ of $\g$, where $\pa= \pi\tau$ and
$\rr$ denotes the kernel of $\pa$. All these data give the following commutative diagram with exact rows

\begin{equation*}
\xymatrix@+10pt{
0\ \ar@{->}[r]& \rr \ \ar@{->}[r]
\ar@{->}[d]_{ }
 &\f\ar@{->}[r]^{\pa}
\ar@{->}[d]_{\tau }
&\g \ar@{->}[r]
\ar@{=}[d]_{ }
&0 \\
0\ \ar@{->}[r]
& \Z_{\q}(\m) \ar@{->}[r]
 & \m \ar@{->}[r]^{\pi}
& \g \ar@{->}[r]
&0 .
}\end{equation*}
Now we claim that $\tau (\rr\#_{\q} \f)=0$. Indeed, given any $r\in \rr$ and $f\in \f$,
we have $\tau([r,f])=[\tau(r), \tau(f)]=0$ because $\tau(r)\in \Z_{\q}(\m) =\Ker\pi$. Moreover, for each $r\in \rr$,
$\tau (\q r)=\q\tau(r)=0$ because $\tau(r)\in \Z_{\q}(\m)$. Denote $\f/(\rr\#_{\q} \f)$
by $\cc$. Then we have the following induced commutative triangle of Lie algebras

 \begin{equation*}
\xymatrix@+10pt{
& \cc\ar@{->}[rd]^{\bar{\pa}}
\ar@{->}[rr]^{\bar{\tau} }
& &\m \ar@{->}[ld]_{\pi}
 \\
& &   \g  & 
}
\end{equation*}
Since $\bar{\tau}$ is an epimorphism, $\bar{\tau}(\Z_{\q}(\cc))\subseteq \Z_{\q}(\m)= \Ker \pi $, therefore
$\bar{\pa}(\Z_{\q}(\cc))=0$, i.e. $\Z_{\q}(\cc)\subseteq \Ker \bar{\pa}$. On the other hand, since
$\bar{\pa} \colon \cc\to \g$ is a $\q$-central extension, $\Ker \bar{\pa} \subseteq \Z_{\q}(\cc)$.
We get that $\Z_{\q}(\cc)= \Ker \bar{\pa}$. Then, by Lemma \ref{cor1} we have
\[
\Z^{\wedge}_{\q}(\g)=\bar{\pa}(\Z_{\q}(\cc))=\bar{\pa}(\Ker \bar{\pa})=0.
\]
\end{proof}

\section{The coincidence of $\q$-exterior and $\q$-tensor centers}\label{S:E}

In this section, we introduce the tensor analog of the $\q$-exterior center of a Lie algebra and prove their coincidence under additional conditions. The idea comes from the work \cite{Ni2}, which later was used to establish a similar result for Leibniz algebras in \cite{KhKuLa}.

\begin{definition}\label{Def q-tensor center}  The $\q$-\emph{tensor center} of a Lie algebra $\g$ is defined by
\[
\Z^{\otimes}_{\q}(\g ) = \Big \{  g \in \g \; | \{g\}=0_{\g \otimes^{\q} \g} \:\text{and}\: g\otimes x=0_{\g \otimes^{\q} \g}
\:\text{for each}\: x\in \g   \Big \}.
\]
\end{definition}

Let us note that, using Lemma \ref{lemma xi}, we have obvious inclusions
\[
\Z^{\otimes}_{\q}(\g ) \subseteq \Z^{\wedge}_{\q}(\g ) \subseteq \Z_{\q}(\g ) \subseteq \Z(\g ).
\]

\begin{lemma} \label{lemma3}
Let $i  \colon \Gamma \big ( \g / (\g \#_{\q} \g)\big) \to \g \otimes ^{\q} \g$
be the homomorphism as in Theorem \ref{theorem1} and $p \colon \g \otimes ^{\q} \g \to \big ( \g / (\g \#_{\q} \g)\big) \otimes^{\q} \big ( \g / (\g \#_{\q} \g)\big)$ be the canonical projection.
If $\g / (\g \#_{\q} \g)$ is a free $\La_{\q}$-module, then
\[
i\Big(\Gamma \big ( \g / (\g \#_{\q} \g)\big)\Big) \cap \Ker p
= \{0\}.
\]
\end{lemma}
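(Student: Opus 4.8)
The claim is that the image of $i$ meets the kernel of the canonical projection $p$ only in $0$. The natural approach is to factor $p$ through the map used in part (iii) of Theorem~\ref{theorem1}. Indeed, in the proof of Theorem~\ref{theorem1}(iii) one has the commutative square
\[
\xymatrix@+10pt{
 \Gamma\big(\g/(\g\#_{\q}\g)\big) \ar@{->}[r]^{\quad i}
\ar@{=}[d]_{ }
 &\g \otimes ^{\q} \g
\ar@{->}[d]^{p}
 \\
  \Gamma\big(\g/(\g\#_{\q}\g)\big) \ar@{->}[r]^{i'\qquad\quad}
  &\big(\g/(\g\#_{\q}\g)\big)\otimes^{\q}\big(\g/(\g\#_{\q}\g)\big) ,
}
\]
where $i'$ is the analogue of $i$ for the abelian Lie algebra $\g/(\g\#_{\q}\g)$ and was shown there to be a monomorphism (using Proposition~\ref{abelian1} and the freeness hypothesis). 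The plan is simply to exploit commutativity: $p\circ i = i'$, and $i'$ is injective.

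**Key steps.**
First I would take an element $y\in i\big(\Gamma(\g/(\g\#_{\q}\g))\big)\cap \Ker p$, so $y=i(t)$ for some $t\in\Gamma(\g/(\g\#_{\q}\g))$ and $p(y)=0$. Then by commutativity of the square above, $i'(t)=p(i(t))=p(y)=0$. Since $\g/(\g\#_{\q}\g)$ is a free $\La_{\q}$-module, the argument in the proof of Theorem~\ref{theorem1}(iii) — namely that the lower horizontal arrow in that diagram is a monomorphism by Proposition~\ref{abelian1} together with \cite[Proposition~17]{El1} — shows $i'$ is injective, whence $t=0$ and therefore $y=i(t)=0$. This gives the desired equality $i\big(\Gamma(\g/(\g\#_{\q}\g))\big)\cap\Ker p=\{0\}$.

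**Where the work really sits.**
There is essentially no obstacle once the factorization $p\circ i=i'$ is made explicit; the only thing to check carefully is that the composite $p\circ i$ really equals the map $i'$, which on generators sends $\gamma\big(g+\g\#_{\q}\g\big)$ to $\big(g+\g\#_{\q}\g\big)\otimes\big(g+\g\#_{\q}\g\big)$. This is immediate from the description of $i$ on generators ($\gamma(g+\g\#_{\q}\g)\mapsto g\otimes g$) and the definition of the canonical projection $p$ ($g\otimes g\mapsto (g+\g\#_{\q}\g)\otimes(g+\g\#_{\q}\g)$, $\{g\}\mapsto\{g+\g\#_{\q}\g\}$). So the "hard part" — injectivity of $i'$ — has already been done inside Theorem~\ref{theorem1}(iii), and the present lemma is a clean corollary of that analysis; I would phrase the proof as an invocation of that diagram rather than repeating the homological input.
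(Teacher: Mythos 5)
Your argument is correct and is essentially identical to the paper's own proof: both rest on the commutative square from Theorem~\ref{theorem1}(iii), with the right vertical arrow being $p$ and the bottom arrow a monomorphism by the freeness hypothesis. (The paper invokes injectivity of both horizontal arrows; as you observe, injectivity of the lower one already suffices.)
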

\begin{proof} Consider the following commutative diagram:
\begin{equation*}
\xymatrix@+10pt{
 \Gamma\big(\g/(\g\#_{\q}\g)\big) \ \ar@{->}[r]^i
\ar@{=}[d]_{ }
 &\g \otimes ^{\q} \g
\ar@{->}[d]^p
 \\
  \Gamma\big(\g/(\g\#_{\q}\g)\big) \ar@{->}[r]
  &\big(\g/(\g\#_{\q}\g)\big)\otimes^{\q}\big(\g/(\g\#_{\q}\g)\big) .
}\end{equation*}
By Theorem \ref{theorem1}, both horizontal arrows in this diagram are monomorphisms, and the assertion follows.
\end{proof}

\begin{proposition}\label{P:pr}
If $\g/(\g\#_{\q} \g)$ is a free $\La_{\q}$-module, then there is an isomorphism of Lie algebras
\begin{equation}\label{E:eq2}
\g\otimes^{\q} \g\cong (\g \wedge^{\q}\g)\times
 \Gamma\big(\g/(\g\#_{\q}\g)\big).
\end{equation}
\end{proposition}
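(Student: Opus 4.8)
The plan is to produce the isomorphism \eqref{E:eq2} by exhibiting an explicit splitting of the canonical projection $\pi \colon \g \otimes^{\q} \g \twoheadrightarrow \g \wedge^{\q} \g$, using Theorem~\ref{theorem1} to identify its kernel with $\Gamma\big(\g/(\g\#_{\q}\g)\big)$, and Lemma~\ref{lemma3} to guarantee that the relevant subobjects intersect trivially. Concretely, I would first recall from Theorem~\ref{theorem1}(i) that the sequence
\[
\Gamma\big(\g/(\g\#_{\q}\g)\big) \overset{i}{\lra} \g \otimes^{\q} \g \overset{\pi}{\lra} \g \wedge^{\q} \g \lra 0
\]
is exact, and from Theorem~\ref{theorem1}(iii) that, under the freeness hypothesis, $i$ is a monomorphism; Theorem~\ref{theorem1}(ii) says its image is an abelian ideal with trivial bracket. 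So $\Ker \pi = \Img i \cong \Gamma\big(\g/(\g\#_{\q}\g)\big)$, and the content is that this extension of Lie algebras splits and is in fact a direct product.

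Next I would construct a Lie algebra retraction $\rho \colon \g \otimes^{\q} \g \to \Img i$ of $i$. The natural candidate factors through the canonical projection $p \colon \g \otimes^{\q}\g \to \big(\g/(\g\#_{\q}\g)\big) \otimes^{\q}\big(\g/(\g\#_{\q}\g)\big)$ appearing in Lemma~\ref{lemma3}: by Proposition~\ref{abelian1} applied to the abelian Lie algebra $\g/(\g\#_{\q}\g)$, the target splits as
\[
\big(\g/(\g\#_{\q}\g)\big) \times \Big(\big(\g/(\g\#_{\q}\g)\big)\otimes_{\La_{\q}}\big(\g/(\g\#_{\q}\g)\big)\Big),
\]
and the commutative square of Lemma~\ref{lemma3} identifies $\Gamma\big(\g/(\g\#_{\q}\g)\big)$ with a submodule of this target via the $\gamma$-diagonal $\gamma(a) \mapsto a \otimes a$. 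Composing $p$ with the projection onto the $\Gamma$-part (which is a genuine retraction of the $\gamma$-diagonal because over a free $\La_{\q}$-module $\Gamma$ embeds as the symmetric part) yields a $\La$-module map $\g\otimes^{\q}\g \to \Gamma\big(\g/(\g\#_{\q}\g)\big)$; precomposing $i$ with it is the identity by the formula $i(\gamma(g+\g\#_{\q}\g)) = g\otimes g$ and the explicit description of $p$. One checks this map is a Lie homomorphism since its target is abelian and, on brackets $[h\otimes g, h'\otimes g'] = [h,g]\otimes[h',g']$, the classes $[h,g],[h',g']$ vanish in $\g/(\g\#_{\q}\g)$, so $p$ kills all brackets. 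With $i$ a Lie mono onto an ideal and $\rho i = \id$, standard arguments give $\g\otimes^{\q}\g = \Img i \oplus \Ker \rho$ as $\La$-modules, $\Ker \rho$ is an ideal, and $\pi|_{\Ker\rho}$ is an isomorphism onto $\g\wedge^{\q}\g$ (surjective since $\Img i = \Ker\pi \subseteq \ldots$ wait — here one uses $\Ker\pi \cap \Ker\rho$: by Lemma~\ref{lemma3}, $\Img i \cap \Ker p = 0$, hence $\Img i \cap \Ker\rho = 0$, so $\pi$ restricted to $\Ker\rho$ is injective, and it is surjective because $\g\otimes^{\q}\g = \Img i + \Ker\rho$ maps onto $\g\wedge^{\q}\g$). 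Since the two ideals have trivial intersection and sum to everything, $\g\otimes^{\q}\g \cong (\g\wedge^{\q}\g)\times \Gamma\big(\g/(\g\#_{\q}\g)\big)$ as Lie algebras.

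The main obstacle I anticipate is the construction of the retraction $\rho$ and verifying it is well-defined and Lie-linear: one must check that the composite of $p$ with the projection onto the $\Gamma$-summand genuinely splits $i$, which rests on matching the explicit formula for $p$ (and the isomorphism of Proposition~\ref{abelian1}) against the diagonal $\gamma(a)\mapsto a\otimes a$, and on the fact that over a free $\La_{\q}$-module the universal quadratic functor $\Gamma$ is a direct summand of the tensor square via the diagonal. The Lie-algebra compatibility is comparatively painless because every map involved lands in an abelian Lie algebra and the defining relation \eqref{t6} forces brackets in $\g\otimes^{\q}\g$ into $(\g\#_{\q}\g)\otimes(\g\#_{\q}\g)$, which $p$ annihilates. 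Everything else — exactness, the value of $\Ker\pi$, the triviality of $\Img i \cap \Ker p$ — is quoted directly from Theorem~\ref{theorem1} and Lemma~\ref{lemma3}.
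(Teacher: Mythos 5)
Your argument is correct and takes essentially the paper's route: both identify $\Ker\big(\g\otimes^{\q}\g\to\g\wedge^{\q}\g\big)$ with $\Gamma\big(\g/(\g\#_{\q}\g)\big)$ via Theorem~\ref{theorem1} and split the sequence by passing to the quotient $\g/(\g\#_{\q}\g)$, where the freeness hypothesis does the work --- the only difference being that the paper observes the bottom row of the comparison diagram is a short exact sequence of $\La_{\q}$-modules with free (hence projective) cokernel and so splits, instead of constructing the retraction onto the $\Gamma$-summand of the tensor square explicitly as you do. One small slip: since $\Ker p\subseteq\Ker\rho$, Lemma~\ref{lemma3} ($\Img i\cap\Ker p=0$) does \emph{not} imply $\Img i\cap\Ker\rho=0$, but this is harmless because $\rho i=\id$ already forces that intersection to vanish.
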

\begin{proof} By Theorem \ref{theorem1}, we have the following commutative diagram of Lie algebras with exact rows:
\begin{equation*}
\xymatrix@+10pt{
0\ \ar@{->}[r]& \Gamma\big(\g/(\g\#_{\q} \g)\big) \ \ar@{->}[r]^{}
 \ar@{=}[d]_{ }
 &\g\otimes^{\q} \g\ar@{->}[r]
\ar@{->}[d]
&\g\wedge^{\q} \g\ar@{->}[r]
 \ar@{->}[d]_{ }
&0 \\
0\ \ar@{->}[r]
& \Gamma\big(\g/(\g\#_{\q} \g)\big) \ar@{->}[r]^{}
&\frac{\g}{\g\#_{\q} \g}\otimes^{\q} \frac{\g}{\g\#_{\q} \g}
\ar@{->}[r]
& \frac{\g}{\g\#_{\q} \g}\wedge^{\q} \frac{\g}{\g\#_{\q} \g}
\ar@{->}[r]
&0.
}\end{equation*}
The lower row is the exact sequence of abelian Lie algebras, i.e. just $\La_{\q}$-modules. Moreover, by Proposition
\ref{abelian1}
\[
\big(\g/(\g\#_{\q}\g)\big)\wedge^{\q} \big(\g/(\g\#_{\q}\g)\big)\cong
\big(\g/(\g\#_{\q}\g)\big) \times \Big(\big(\g/(\g\#_{\q}\g)\big)
\wedge_{\La_{\q}} \big(\g/(\g\#_{\q}\g)\big)\Big).
\]
Since $\g/(\g\#_{\q}\g)$ is a free $\La_{\q}$-module,
$\big(\g/(\g\#_{\q}\g)\big)\wedge_{\La_{\q}}
\big(\g/(\g\#_{\q}\g)\big)$ is also a free $\La_{\q}$-module. Therefore, the lower row in the
diagram splits. Consequently, the upper row splits too.
\end{proof}

\begin{theorem}\label{T:t_e_centers} If $\g/(\g\#_{\q} \g)$ is a free $\La_{\q}$-module, then
\[
Z^{\otimes}_{\q}(\mathfrak{g})=Z^{\wedge}_{\q}(\mathfrak{g}).
\]
\end{theorem}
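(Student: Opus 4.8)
The plan is to show the two centers coincide by proving the reverse of the already-known inclusion $Z^{\otimes}_{\q}(\g)\subseteq Z^{\wedge}_{\q}(\g)$, using the splitting from Proposition~\ref{P:pr}. Fix the isomorphism $\Phi\colon\g\otimes^{\q}\g\xrightarrow{\ \cong\ }(\g\wedge^{\q}\g)\times\Gamma\big(\g/(\g\#_{\q}\g)\big)$ coming from the split exact sequence $0\to\Gamma\big(\g/(\g\#_{\q}\g)\big)\xrightarrow{i}\g\otimes^{\q}\g\xrightarrow{\pi}\g\wedge^{\q}\g\to 0$, so that the composite $\g\otimes^{\q}\g\xrightarrow{\pi}\g\wedge^{\q}\g$ is the first projection and the second component lands in $\Gamma$. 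Under this identification, the generator $h\otimes g$ maps to $(h\wedge g, c)$ for some $c\in\Gamma\big(\g/(\g\#_{\q}\g)\big)$, and by the explicit description of $i$ in Theorem~\ref{theorem1} together with the defining relations, the element $g\otimes g$ maps to $(g\wedge g,\gamma(\bar g))=(0,\gamma(\bar g))$, where $\bar g$ is the class of $g$ in $\g/(\g\#_{\q}\g)$; similarly $\{h\}$ maps to $(\{h\},0)$ since $\{h\}$ lies in the image of a section of $\pi$ on that part.

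First I would take $g\in Z^{\wedge}_{\q}(\g)$ and show $g\otimes x=0$ in $\g\otimes^{\q}\g$ for all $x\in\g$, and $\{g\}=0$. For the second claim: $\{g\}=0$ in $\g\wedge^{\q}\g$ by hypothesis, and since $\{g\}$ is already the image of an element of $\g\otimes^{\q}\g$ whose $\pi$-image is $\{g\}=0$ and whose $\Gamma$-component is $0$ (as noted above), the splitting forces $\{g\}=0$ in $\g\otimes^{\q}\g$. For the first claim I would use bilinearity and split $g\otimes x$ via the identity $g\otimes x = \tfrac12\big((g+x)\otimes(g+x)-g\otimes g-x\otimes x\big) - \tfrac12\big(x\otimes g - g\otimes x\big)$ — but since $\tfrac12$ need not exist, instead I would argue directly on components: applying $\pi$ gives $g\wedge x=0$ by hypothesis, so $\Phi(g\otimes x)$ has trivial first component; its second component lies in $\Gamma\big(\g/(\g\#_{\q}\g)\big)$ and, because $\Gamma$ is generated by the $\gamma(\bar a)$ and the $\Gamma$-component of $h\otimes g$ is built from such symbols via the polarization relations in $\Gamma$, I would express the $\Gamma$-component of $g\otimes x$ in terms of $\gamma(\bar g+\bar x)-\gamma(\bar g)-\gamma(\bar x)$-type elements; since $g\wedge g=0$ and $g\wedge x=0$ and $\{g\}=0$, the class $\bar g$ must vanish in $\g/(\g\#_{\q}\g)$ — indeed $g\in Z^{\wedge}_{\q}(\g)\subseteq\Z_{\q}(\g)$ gives little directly, so the cleaner route is: the map $\g\to\Gamma\big(\g/(\g\#_{\q}\g)\big)$, $g\mapsto(\text{$\Gamma$-component of }g\otimes g)=\gamma(\bar g)$, together with the vanishing of $g\otimes g$'s image under any map detecting $\bar g$, shows that $\bar g$ maps to a torsion-free-detected class that must be $0$. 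I would make this precise by noting that $\g\wedge^{\q}\g\to(\g/(\g\#_{\q}\g))\wedge^{\q}(\g/(\g\#_{\q}\g))\cong(\g/(\g\#_{\q}\g))\times(\cdots)$ sends $\{g\}$ to the class of $\bar g$ in the first factor; since $\{g\}=0$, we get $\bar g=0$, hence $\gamma(\bar g)=0$ and the $\Gamma$-component of $g\otimes x$ vanishes as well. Therefore $\Phi(g\otimes x)=0$, so $g\otimes x=0$, giving $g\in Z^{\otimes}_{\q}(\g)$.

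The main obstacle I anticipate is the bookkeeping of the $\Gamma$-component of an arbitrary generator $h\otimes g$ under the splitting $\Phi$: the universal quadratic functor's relations are quadratic rather than bilinear, so the $\Gamma$-component of $g\otimes x$ is not simply additive in $x$, and one must use the cross-effect $\gamma(a+b)-\gamma(a)-\gamma(b)$ carefully to reduce everything to the vanishing of $\bar g$. Once it is established that $g\in Z^{\wedge}_{\q}(\g)$ forces $\bar g=0$ in $\g/(\g\#_{\q}\g)$ — which follows cleanly by pushing $\{g\}=0$ forward along $\g\wedge^{\q}\g\to(\g/(\g\#_{\q}\g))\wedge^{\q}(\g/(\g\#_{\q}\g))$ and reading off the first coordinate in the decomposition of Proposition~\ref{abelian1} — the remaining steps are formal consequences of the splitting in Proposition~\ref{P:pr}. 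Combining the two inclusions $Z^{\otimes}_{\q}(\g)\subseteq Z^{\wedge}_{\q}(\g)$ (already noted) and $Z^{\wedge}_{\q}(\g)\subseteq Z^{\otimes}_{\q}(\g)$ yields the claimed equality.
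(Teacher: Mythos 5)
Your argument is correct and is essentially the paper's proof in different packaging: both hinge on deducing $\bar g=0$ in $\g/(\g\#_{\q}\g)$ from $\{g\}=0$ via Proposition \ref{abelian1}, and then on the fact that the kernel of $\g\otimes^{\q}\g\to\g\wedge^{\q}\g$ (the image of $\Gamma$) meets the kernel of the projection onto $\big(\g/(\g\#_{\q}\g)\big)\otimes^{\q}\big(\g/(\g\#_{\q}\g)\big)$ trivially, which the paper isolates as Lemma \ref{lemma3} and you encode as the splitting of Proposition \ref{P:pr}. The one point to tighten is your identification of the $\Gamma$-component of $g\otimes x$: rather than polarization bookkeeping, just observe that the retraction onto $\Gamma$ constructed in Proposition \ref{P:pr} factors through that projection, so it kills $g\otimes x$ as soon as $\bar g=0$.
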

\begin{proof}
First, let us prove that
\[
\Z^{\wedge}_{\q}(\g)\cap (\g\#_{\q}\g) =\Z^{\otimes}_{\q}(\g).
\]
For that, we take $g\in \Z^{\wedge}_{\q}(\g)\cap (\g\#_{\q}\g)$. Then  $g\in \Z^{\wedge}_{\q}(\g)$
implies that
\[
g\otimes x,  \{g\}\in \Ker \big(\g \otimes^{\q} \g {\lra} \g \wedge^{\q} \g\big)
\]
for all $x\in \g$.
And $g\in \g\#_{\q}\g$ implies that
\[
g\otimes x,  \{g\}\in \Ker \Big(\g \otimes^{\q} \g \lra
\big(\g/  (\g\#_{\q}\g)\big) \wedge^{\q} \big(\g /  (\g\#_{\q}\g)\big)\Big),
\]
for each $x\in \g$. By Lemma \ref{lemma3}, $g\otimes x=0_{\g \otimes^{\q} \g}$ and
$\{g\}=0_{\g \otimes^{\q} \g}$, i.e. $g\in \Z^{\otimes}_{\q}(\g)$.

To show the converse inclusion $\Z^{\otimes}_{\q}(\g)\subseteq\Z^{\wedge}_{\q}(\g)\cap (\g\#_{\q}\g)$, it suffices to prove that $\Z^{\otimes}_{\q}(\g)\subseteq \g\#_{\q} \g$.
Take $g\in \Z^{\otimes}_{\q}(\g)$. Then $\{g\}=0_{\g \otimes^{\q} \g}$. Hence,
$\{\bar{g}\}=0_{(\g/ (\g\#_{\q}\g)) \otimes^{\q} (\g/  (\g\#_{\q}\g) )}$,
where $\bar{g}=g+\g\#_{\q}\g$. Since $\g/(\g\#_{\q} \g)$ is a free
$\La_{\q}$-module, by Proposition \ref{abelian1}
\[
\big(\g/(\g\#_{\q}\g)\big)\otimes^{\q} \big(\g/(\g\#_{\q}\g)\big)\cong
\big(\g/(\g\#_{\q}\g)\big) \times \Big(\big(\g/(\g\#_{\q}\g)\big)
\otimes_{\La_{\q}} \big(\g/(\g\#_{\q}\g)\big)\Big).
\]
Using this isomorphism we deduce that
$\{\bar{g}\}=0_{(\g/  (\g\#_{\q}\g)) \otimes^{\q} (\g/  (\g\#_{\q}\g) )}$
if and only if $\bar{g}=0_{\g/  (\g\#_{\q}\g)}$. Thus, $g\in \g\#_{\q} \g$.

Finally, by using  Proposition \ref{abelian1} again (for the non-abelian $\q$-exterior product), one can repeat the reasoning given in the previous paragraph to show that $\Z^{\wedge}_{\q}(\g)\subseteq
\g\#_{\q} \g$. This completes the proof.
\end{proof}

\

\section{Strongly $\q$-capable Lie algebras}\label{S:Strongly}


Our definition of a $\q$-capable Lie algebra for $\q=0$ gives the notion of a capable Lie algebra. So, it is a modulo $\q$ generalization of the usual capability of Lie algebras.

In the case of groups, according to Ellis' definition in \cite{El2}, a group $G$ is $\q$-capable if there exists a group $M$ whose center and $\q$-center coincide and the quotient of $M$ by its center ($\q$-center) is isomorphic to $G$.
Now we give the Lie algebra version of the $\q$-capability in the sense of Ellis, which differs from our notion of $\q$-capability.

\begin{definition}\label{Def strong} We say that a Lie algebra $\g$ is \emph{strongly $\q$-capable} (or $\q$-\emph{capable} in the sense of Ellis) if there exists a Lie algebra
$\m$ such that $\Z(\m) = \Z_{\q}(\m)$ and $\g \cong \m/\Z(\m)$.
\end{definition}

Note that any strongly $\q$-capable Lie algebra is $\q$-capable at the same time.

In the rest of this section, we describe a necessary and sufficient condition of strongly $\q$-capability of a Lie algebra via the non-abelian $\q$-exterior product.

Let $\g$ be a Lie algebra, and $\h$ be an ideal of $\g$.
Let $\h \otimes \g$ (resp. $\h \wedge \g$) denote the non-abelian tensor (resp. exterior)
product of $\h$ and $\g$ (see \cite{El1}). We have the natural homomorphisms
$\h \otimes \g \to \h \otimes^{\q} \g$ and
$\h \wedge \g \to \h \wedge^{\q} \g$. We introduce the following notations:
\begin{align*}
&\h \boxtimes \g = \Img \big(\h \otimes \g \lra \h \otimes^{\q} \g \big),\\
&\h \curlywedge^{\q} \g = \Img \big(\h \wedge \g \lra \h \wedge^{\q} \g \big).
\end{align*}
The proof of the following proposition is standard.
\begin{proposition}\label{propE1} Let $0 \to \mathfrak h \to \mathfrak g \to \mathfrak g' \to 0$ be an extension of Lie algebras.
Then we have the following exact sequence
\[
\h \curlywedge^{\q} \g \lra \g \curlywedge^{\q} \g \lra
\g' \curlywedge^{\q} \g' \lra 0 .
\]
\end{proposition}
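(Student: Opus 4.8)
The plan is to deduce the exact sequence for $\curlywedge^{\q}$ from the already-established exact sequence for $\wedge^{\q}$ (Proposition~\ref{prop1}) by tracking images under the canonical maps from the ordinary (non-modular) exterior product. Concretely, write $\h$ for the kernel of $\g\to\g'$, regarded as an ideal of $\g$, and consider the commutative diagram whose top row is the exact sequence of Proposition~\ref{prop1} for the ordinary exterior product, $\h\wedge\g\to\g\wedge\g\to\g'\wedge\g'\to 0$ (this is the $\q=0$ case, valid by \cite{El1}), and whose bottom row is the exact sequence of Proposition~\ref{prop1} at the given $\q$, $\h\wedge^{\q}\g\to\g\wedge^{\q}\g\to\g'\wedge^{\q}\g'\to 0$; the vertical maps are the natural homomorphisms $\h\wedge\g\to\h\wedge^{\q}\g$, etc., which are compatible with the horizontal maps because all of them are defined on generators by $h\wedge g\mapsto h\wedge g$. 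By definition, $\h\curlywedge^{\q}\g$, $\g\curlywedge^{\q}\g$, $\g'\curlywedge^{\q}\g'$ are precisely the images of the three vertical maps.

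First I would check surjectivity of $\g\curlywedge^{\q}\g\to\g'\curlywedge^{\q}\g'$. Given an element of $\g'\curlywedge^{\q}\g'$, it is the image of some $z\in\g'\wedge\g'$ under $\g'\wedge\g'\to\g'\wedge^{\q}\g'$; by exactness of the top row, $z$ lifts to some $y\in\g\wedge\g$; then the image of $y$ in $\g\wedge^{\q}\g$ lies in $\g\curlywedge^{\q}\g$ and maps to the given element by commutativity of the right-hand square. Next, exactness in the middle: the composite $\h\curlywedge^{\q}\g\to\g\curlywedge^{\q}\g\to\g'\curlywedge^{\q}\g'$ is zero since already $\h\wedge^{\q}\g\to\g\wedge^{\q}\g\to\g'\wedge^{\q}\g'$ is zero. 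For the reverse inclusion, take $w\in\g\curlywedge^{\q}\g$ mapping to $0$ in $\g'\curlywedge^{\q}\g'$; lift $w$ to $\tilde w\in\g\wedge\g$. The image of $\tilde w$ in $\g'\wedge^{\q}\g'$ is zero, but this does not immediately say its image in $\g'\wedge\g'$ is zero; this is the one place requiring care.

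The main obstacle is exactly this last point: the vertical map $\g'\wedge\g'\to\g'\wedge^{\q}\g'$ need not be injective, so a lift whose image is killed modulo $\q$ need not itself map into the kernel of $\g'\wedge\g'\to\g'\curlywedge^{\q}\g'$. I would resolve it by a diagram-chase using the explicit description of the kernel of $\h\wedge\g\to\h\wedge^{\q}\g$: the relations \eqref{t7}--\eqref{t10} added when passing from $\wedge$ to $\wedge^{\q}$ are generated by expressions involving $\q$ times generators, and one checks on generators that the image of $\tilde w$ in $\g'\wedge\g'$, after correcting $\tilde w$ by a suitable element of $\h\wedge\g$, lands in that kernel; equivalently, one can first quotient the whole top row by the images of the kernels of the vertical maps and then apply the snake lemma to the resulting short exact sequences of cokernels. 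Once the image of the corrected $\tilde w$ in $\g'\wedge\g'$ is shown to die in $\g'\curlywedge^{\q}\g'$, exactness of the top row produces a preimage in $\h\wedge\g$ whose image in $\h\curlywedge^{\q}\g$ maps to $w$, completing the argument. Since the paper calls the proof "standard", I expect the intended write-up to be a short version of this chase, invoking Proposition~\ref{prop1} twice and the functoriality of the natural maps $\wedge\to\wedge^{\q}$.
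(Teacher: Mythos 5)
Your handling of surjectivity and of the vanishing of the composite is correct, and since the paper dismisses this proof as ``standard'' there is no written argument to compare you against. But the step you yourself single out as ``the one place requiring care'' is a genuine gap, and neither of your proposed repairs closes it. The snake-lemma variant is not available: the rows of your diagram are only right exact (the maps out of $\h\wedge\g$ and $\h\wedge^{\q}\g$ need not be injective) and the vertical maps are not surjective, so there are no short exact sequences to feed into it. More seriously, the generator-by-generator check cannot succeed either, because with the definitions as literally given the sequence is \emph{not} exact in the middle: an element of $\g\curlywedge^{\q}\g$ can die in $\g'\wedge^{\q}\g'$ purely on account of the new relation \eqref{t10}, $\q(g_1'\wedge g_2')=\{[g_1',g_2']\}$, without any lift of it to $\g\wedge\g$ being correctable into $\Img(\h\wedge\g)$ --- exactly the failure mode you identified but hoped to rule out.

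Concretely, take $\La=\mathbb{Z}$, $\q=2$, let $\g$ be the Heisenberg Lie ring with basis $x,y,z$, where $[x,y]=z$ is central, and let $\h=\Z(\g)=\mathbb{Z}z$, so that $\g'=\g/\h\cong\mathbb{Z}^2$ is abelian. The element $2(x\wedge y)$ lies in $\g\curlywedge^{2}\g$ and maps to $2(\bar x\wedge\bar y)=\{[\bar x,\bar y]\}=\{0\}=0$ in $\g'\wedge^{2}\g'$, hence to $0$ in $\g'\curlywedge^{2}\g'$. On the other hand, the image of $\h\curlywedge^{2}\g$ in $\g\wedge^{2}\g$ is the Lie subalgebra generated by the elements $z\wedge g$ with $g\in\g$, and every such generator is annihilated by the homomorphism $\xi^{\wedge}\colon\g\wedge^{2}\g\to\g$ of Lemma~\ref{lemma xi}, since $\xi^{\wedge}(z\wedge g)=[z,g]=0$; whereas $\xi^{\wedge}\big(2(x\wedge y)\big)=2z\neq 0$ in $\g$. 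Therefore $2(x\wedge y)$ belongs to the kernel of $\g\curlywedge^{2}\g\to\g'\curlywedge^{2}\g'$ but not to the image of $\h\curlywedge^{2}\g$. (The ring $\mathbb{Z}$ is $2$-torsion-free, so the blanket hypothesis imposed after the proposition does not rescue the statement.) The conclusion is that no diagram chase can complete your argument: the proposition itself needs to be corrected (for instance by enlarging the first term of the sequence) before a proof can exist, and you should flag this rather than assert that the remaining verification is routine.
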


In what follows, the ground ring $\La$ is assumed to be $\q$-torsion-free.

\begin{lemma} \label{lemmaE1} Let $\f$ be a free Lie algebra.
Assume that $\xi^{\wedge} \colon \f \wedge ^{\q} \f \to \f$ is defined as in Lemma~\ref{lemma xi}. Then
$\xi^{\wedge}$ induces an isomorphism $\f \curlywedge^{\q}\f \cong [\f, \f]$.
\end{lemma}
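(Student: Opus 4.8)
The plan is to combine the already-established isomorphism $\xi^{\wedge}\colon \f\wedge^{\q}\f \cong \f\#_{\q}\f$ (Lemma~\ref{lemma1}) with an analysis of the image $\f\curlywedge^{\q}\f = \Img(\f\wedge\f \to \f\wedge^{\q}\f)$. First I would recall that for a free Lie algebra $\f$ the $\q=0$ version of Lemma~\ref{lemma1} (i.e. \cite[Proposition 1.2]{El3}) gives $\xi^{\wedge}\colon \f\wedge\f \cong [\f,\f]$. The natural square
\begin{equation*}
\xymatrix@+10pt{
\f\wedge\f \ar@{->}[r] \ar@{->}[d]_{\xi^{\wedge}}^{\cong} & \f\wedge^{\q}\f \ar@{->}[d]^{\xi^{\wedge}}_{\cong} \\
[\f,\f] \ar@{^{(}->}[r] & \f\#_{\q}\f
}
\end{equation*}
commutes, because both horizontal maps are induced by the defining-relation-compatible assignments $h\wedge g \mapsto h\wedge g$, $\{h\}\mapsto\{h\}$ and $\xi^{\wedge}$ is defined the same way on generators in both settings. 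Chasing this diagram, $\f\curlywedge^{\q}\f$ — the image of the top arrow — maps isomorphically under the right-hand $\xi^{\wedge}$ onto the image of $[\f,\f]$ in $\f\#_{\q}\f$, which is simply $[\f,\f]$ itself (the inclusion $[\f,\f]\hookrightarrow\f\#_{\q}\f$ being the obvious one, since $\f\#_{\q}\f$ is generated by all $[h,g]$ and all $\q h$).

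The one subtlety to pin down is that the restriction of the (injective) map $\xi^{\wedge}\colon\f\wedge^{\q}\f\to\f$ to $\f\curlywedge^{\q}\f$ really does land in $[\f,\f]$ and not in a strictly larger submodule: every element of $\f\curlywedge^{\q}\f$ is a sum of images of pure exterior products $h\wedge g$ with $h,g\in\f$ (no $\{h\}$-symbols occur, since the top-left object $\f\wedge\f$ has no such generators), and $\xi^{\wedge}(h\wedge g)=[h,g]\in[\f,\f]$. Conversely $[\f,\f]$ is hit, since any $[h,g]$ is $\xi^{\wedge}$ of the image of $h\wedge g\in\f\wedge\f$. Hence $\xi^{\wedge}$ maps $\f\curlywedge^{\q}\f$ onto $[\f,\f]$, and injectivity is inherited from the isomorphism of Lemma~\ref{lemma1}; thus $\f\curlywedge^{\q}\f\cong[\f,\f]$.

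I expect the main (mild) obstacle to be bookkeeping rather than anything deep: one must be careful that the map $\f\wedge\f\to\f\wedge^{\q}\f$ is the natural one used in the definition of $\curlywedge^{\q}$ and that it is compatible with both copies of $\xi^{\wedge}$, i.e. that the square above genuinely commutes on generators — this is immediate but should be stated. The hypothesis that $\La$ is $\q$-torsion-free is exactly what is needed to invoke Lemma~\ref{lemma1} for $\f\wedge^{\q}\f$, and the $\q=0$ freeness result for $\f\wedge\f$ is unconditional, so both vertical isomorphisms are available. No further input is required.
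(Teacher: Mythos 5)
Your proof is correct and follows the same route as the paper, which simply states that the result is straightforward from Lemma~\ref{lemma1}; your commutative square comparing the $\q=0$ and general-$\q$ instances of that lemma is exactly the bookkeeping the paper leaves implicit. The one point worth stating explicitly is that $\f\curlywedge^{\q}\f$, being the image of a Lie homomorphism, is the Lie subalgebra generated by the elements $h\wedge g$, but relation \eqref{t6} shows this is just the span of such elements, so your description of it is accurate.
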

\begin{proof} Straightforward from Lemma \ref{lemma1}.
\end{proof}

\begin{lemma}\label{lemmaE2}
Let $\g$ be a Lie algebra and $\bar{\pa} \colon \cc \twoheadrightarrow \g$ be the $\q$-central extension induced by a free
 presentation $0\longrightarrow \rr\longrightarrow \f \overset{\pa}{\longrightarrow} \g\longrightarrow 0$ of $\g$. Then there is a natural isomorphism of Lie algebras
\[
[\cc, \cc]\overset{\cong}{\longrightarrow} \g \curlywedge^{\q} \g,
\quad [c_1, c_2]\mapsto \bar{\pa}(c_1)\wedge \bar{\pa}(c_2) .
\]
\end{lemma}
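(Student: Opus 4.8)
The plan is to deduce Lemma~\ref{lemmaE2} from Lemma~\ref{lemma2} by restricting the isomorphism obtained there to the derived subalgebras, in much the same way that Lemma~\ref{lemmaE1} was obtained from Lemma~\ref{lemma1}. The one preliminary I would isolate is a description of $\g\curlywedge^{\q}\g$: it is exactly the sub-Lie-algebra of $\g\wedge^{\q}\g$ generated by the simple wedges $g\wedge g'$ ($g,g'\in\g$), and this sub-Lie-algebra coincides with the $\La$-submodule these elements span. Indeed $\g\curlywedge^{\q}\g$ is by definition the image of the natural map $\g\wedge\g\to\g\wedge^{\q}\g$, and $\g\wedge\g=\g\wedge^{0}\g$ is generated as a Lie algebra by the symbols $g\wedge g'$ alone (there are no $\{g\}$-type generators when $\q=0$); moreover relation \eqref{t6} gives $[g_1\wedge g_1',\,g_2\wedge g_2']=[g_1,g_1']\wedge[g_2,g_2']$, so the $\La$-span of the simple wedges in $\g\wedge^{\q}\g$ is already closed under the bracket. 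The same remark applies to $[\cc,\cc]$: it is the $\La$-submodule of $\cc$ spanned by all brackets $[c_1,c_2]$, and since $\cc\#_{\q}\cc$ contains every $[c_1,c_2]$ we have $[\cc,\cc]\subseteq\cc\#_{\q}\cc$, inside which $[\cc,\cc]$ is the sub-Lie-algebra generated by those brackets.

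With this in hand the argument is short. Since $\La$ is $\q$-torsion-free, Lemma~\ref{lemma2} provides a Lie algebra isomorphism $\varphi\colon\cc\#_{\q}\cc\xrightarrow{\ \cong\ }\g\wedge^{\q}\g$ with $\varphi([c_1,c_2])=\bar{\pa}(c_1)\wedge\bar{\pa}(c_2)$ and $\varphi(\q c)=\{\bar{\pa}(c)\}$. Being a Lie homomorphism, $\varphi$ carries the sub-Lie-algebra $[\cc,\cc]$ onto the sub-Lie-algebra of $\g\wedge^{\q}\g$ generated by the elements $\varphi([c_1,c_2])=\bar{\pa}(c_1)\wedge\bar{\pa}(c_2)$; since $\bar{\pa}\colon\cc\twoheadrightarrow\g$ is surjective these elements run over all simple wedges $g\wedge g'$ with $g,g'\in\g$, so by the preliminary step $\varphi\big([\cc,\cc]\big)=\g\curlywedge^{\q}\g$. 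Hence $\varphi$ restricts to a bijective Lie homomorphism $[\cc,\cc]\xrightarrow{\cong}\g\curlywedge^{\q}\g$, $[c_1,c_2]\mapsto\bar{\pa}(c_1)\wedge\bar{\pa}(c_2)$, which is the asserted isomorphism; its naturality in the free presentation is inherited from that of the isomorphism of Lemma~\ref{lemma2}.

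The main (and only modest) obstacle is the preliminary identification of $\g\curlywedge^{\q}\g$ with the sub-Lie-algebra of $\g\wedge^{\q}\g$ generated by the simple wedges: here one must be careful that it is the image of the \emph{$\q=0$} product that enters the definition, so no $\{g\}$-terms intervene, and then use \eqref{t6} to see that the span of the simple wedges is already a subalgebra. Everything after that is formal, since an isomorphism of Lie algebras automatically carries correspondingly generated subalgebras onto one another. Alternatively, one could imitate the proof of Lemma~\ref{lemma2} line by line, using Proposition~\ref{propE1} applied to a free presentation $0\to\rr\to\f\to\g\to0$ to produce the exact sequence $\rr\curlywedge^{\q}\f\to\f\curlywedge^{\q}\f\to\g\curlywedge^{\q}\g\to0$ and then Lemma~\ref{lemmaE1} to replace $\f\curlywedge^{\q}\f$ by $[\f,\f]$; but the restriction argument above is cleaner and makes the explicit form of the isomorphism transparent at once.
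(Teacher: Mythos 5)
Your proof is correct and follows the same route as the paper, whose entire proof is ``Straightforward from Lemma \ref{lemma2}'': you restrict the isomorphism $\cc\#_{\q}\cc\cong\g\wedge^{\q}\g$ to the derived subalgebra, after correctly identifying $\g\curlywedge^{\q}\g$ with the $\La$-span of the simple wedges via relation \eqref{t6}. You have simply supplied the details the paper leaves implicit.
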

\begin{proof} Straightforward from Lemma \ref{lemma2}.
\end{proof}

\begin{definition} The  $\q$-\emph{tensor} and $\q$-\emph{exterior centers} in the sense of Ellis of a Lie algebra $\g$ are defined by
\begin{align*}
&\Z^{\boxtimes}_{\q}(\g ) = \Big \{  g \in \g \mid g\otimes x=0_{\g \otimes^{\q} \g}
\:\text{for each}\: x\in \g \Big \},\\
&\Z^{\curlywedge}_{\q}(\g) = \Big \{ g \in \g \mid g\wedge x=0_{\g \wedge^{\q} \g}
\:\text{for each}\: x\in \g \Big \}.
\end{align*}
\end{definition}

\begin{remark} For arbitrary Lie algebra $\g$, we have that
$\Z^{\otimes}_{\q}(\g) \subseteq \Z^{\boxtimes}_{\q}(\g )$ and
$\Z^{\wedge}_{\q}(\g) \subseteq \Z^{\curlywedge}_{\q}(\g)$. In general, the inverse inclusions are not true. For instance, if $\g =\La$ is considered as an abelian Lie algebra, then $\Z^{\curlywedge}_{\q}(\La) =\La$. Whilst $\Z^{\wedge}_{\q}(\La) =\{0\}$, since $\La$ is a $\q$-torsion-free ring.
\end{remark}

\begin{corollary}\label{corE1} Let $\bar{\pa} \colon \cc \twoheadrightarrow \g$ be a $\q$-central extension induced by a free
 presentation of $\g$. Then $c\in \Z(\cc)$ if and only if $\bar{\pa}(c)\in \Z^{\curlywedge}_{\q}(\g)$.
\end{corollary}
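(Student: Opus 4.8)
The plan is to mirror the proof of Lemma~\ref{cor1}, replacing the $\q$-exterior product $\g\wedge^{\q}\g$ with its sub-Lie-algebra $\g\curlywedge^{\q}\g$ and the isomorphism of Lemma~\ref{lemma2} with that of Lemma~\ref{lemmaE2}. Write the given free presentation as $0\to\rr\to\f\overset{\pa}{\to}\g\to 0$, so that $\cc=\f/(\rr\#_{\q}\f)$ and $\bar{\pa}\colon\cc\twoheadrightarrow\g$ is the induced $\q$-central extension. By Lemma~\ref{lemmaE2} there is a natural isomorphism $[\cc,\cc]\overset{\cong}{\longrightarrow}\g\curlywedge^{\q}\g$ sending $[c_1,c_2]$ to $\bar{\pa}(c_1)\wedge\bar{\pa}(c_2)$; this is the only structural input needed, together with the fact that $\bar{\pa}$ is an epimorphism with $\Ker\bar{\pa}\subseteq\Z(\cc)$ (which holds since $\Ker\bar{\pa}\subseteq\Z_{\q}(\cc)\subseteq\Z(\cc)$).

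For the forward direction, suppose $c\in\Z(\cc)$. Then $[c,x]=0$ for every $x\in\cc$, so applying the isomorphism of Lemma~\ref{lemmaE2} to the element $[c,x]\in[\cc,\cc]$ gives $\bar{\pa}(c)\wedge\bar{\pa}(x)=0_{\g\wedge^{\q}\g}$ for all $x\in\cc$. Since $\bar{\pa}$ is surjective, every element of $\g$ is of the form $\bar{\pa}(x)$, hence $\bar{\pa}(c)\wedge y=0_{\g\wedge^{\q}\g}$ for all $y\in\g$, i.e.\ $\bar{\pa}(c)\in\Z^{\curlywedge}_{\q}(\g)$. For the converse, suppose $\bar{\pa}(c)\in\Z^{\curlywedge}_{\q}(\g)$. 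Given any $x\in\cc$, the element $[c,x]$ lies in $[\cc,\cc]$, and its image under the isomorphism of Lemma~\ref{lemmaE2} is $\bar{\pa}(c)\wedge\bar{\pa}(x)=0_{\g\wedge^{\q}\g}$ by hypothesis; since the map is an isomorphism, $[c,x]=0$. As this holds for all $x\in\cc$, we conclude $c\in\Z(\cc)$.

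I do not expect any genuine obstacle here: the corollary is essentially a restatement of Lemma~\ref{lemmaE2} combined with surjectivity of $\bar{\pa}$, exactly as Lemma~\ref{cor1} was deduced from Lemma~\ref{lemma2}. The one point requiring a word of care is that, unlike in Lemma~\ref{cor1}, there is no condition ``$\{c\}=0$'' to track, because $\Z^{\curlywedge}_{\q}(\g)$ is defined purely by the vanishing of the wedge products $g\wedge x$ and makes no reference to the symbols $\{g\}$; correspondingly one tests membership in the \emph{full} center $\Z(\cc)$ rather than the $\q$-center $\Z_{\q}(\cc)$, so the argument only uses the bracket relations and not the torsion relation $\q c=0$.
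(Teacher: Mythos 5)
Your proof is correct and follows exactly the route the paper intends: the paper's own proof of Corollary~\ref{corE1} is just the one-line remark ``Straightforward from Lemma~\ref{lemmaE2}'', and your write-up supplies precisely the details that remark leaves implicit (the isomorphism $[\cc,\cc]\cong\g\curlywedge^{\q}\g$, surjectivity of $\bar{\pa}$, and injectivity for the converse). Your closing observation about why no condition $\{c\}=0$ needs to be tracked, in contrast with Lemma~\ref{cor1}, is also accurate.
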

\begin{proof} Straightforward from Lemma \ref{lemmaE2}.
\end{proof}


\begin{theorem}\label{theoremE1} A Lie algebra $\g$ is strongly $\q$-capable if and only if
$\Z^{\curlywedge}_{\q}(\g) =\{0\}$.
\end{theorem}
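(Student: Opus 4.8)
The plan is to mirror the structure of the proof of Theorem~\ref{theorem2}, but replacing the $\q$-center $\Z_{\q}$ by the ordinary center $\Z$ and the $\q$-exterior center $\Z^{\wedge}_{\q}$ by its Ellis-style variant $\Z^{\curlywedge}_{\q}$, using Corollary~\ref{corE1} in place of Lemma~\ref{cor1}. The key technical input will again be a free presentation of $\g$ and the induced $\q$-central extension $\bar{\pa}\colon \cc\twoheadrightarrow\g$, together with the identification $[\cc,\cc]\cong \g\curlywedge^{\q}\g$ from Lemma~\ref{lemmaE2}.

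\textbf{Sufficiency.} Assume $\Z^{\curlywedge}_{\q}(\g)=\{0\}$. Fix a free presentation $0\to\rr\to\f\overset{\pa}{\to}\g\to 0$ and let $\cc=\f/(\rr\#_{\q}\f)$ with $\bar{\pa}\colon\cc\twoheadrightarrow\g$ the induced $\q$-central extension. I claim that for this $\cc$ one has $\Z(\cc)=\Z_{\q}(\cc)$ and $\cc/\Z(\cc)\cong\g$. First, since $\bar{\pa}$ is a $\q$-central extension, $\Ker\bar{\pa}\subseteq\Z_{\q}(\cc)\subseteq\Z(\cc)$. Conversely, take $c\in\Z(\cc)$; by Corollary~\ref{corE1}, $\bar{\pa}(c)\in\Z^{\curlywedge}_{\q}(\g)=\{0\}$, so $c\in\Ker\bar{\pa}$. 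Hence $\Z(\cc)=\Ker\bar{\pa}$, and therefore $\Z(\cc)=\Z_{\q}(\cc)=\Ker\bar{\pa}$, giving $\g\cong\cc/\Ker\bar{\pa}=\cc/\Z(\cc)$ with coinciding center and $\q$-center. Thus $\g$ is strongly $\q$-capable.

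\textbf{Necessity.} Conversely, suppose $\g$ is strongly $\q$-capable, so there is a Lie algebra $\m$ with $\Z(\m)=\Z_{\q}(\m)$ and an epimorphism $\pi\colon\m\twoheadrightarrow\g$ with $\Ker\pi=\Z(\m)=\Z_{\q}(\m)$. Choose a free presentation $0\to\s\to\f\overset{\tau}{\to}\m\to 0$ of $\m$; composing yields a free presentation $0\to\rr\to\f\overset{\pa}{\to}\g\to 0$ of $\g$ with $\pa=\pi\tau$. Exactly as in the proof of Theorem~\ref{theorem2}, one checks $\tau(\rr\#_{\q}\f)=0$: for $r\in\rr$, $f\in\f$ we have $\tau([r,f])=[\tau(r),\tau(f)]=0$ since $\tau(r)\in\Z_{\q}(\m)=\Ker\pi\subseteq\Z(\m)$, and $\tau(\q r)=\q\tau(r)=0$ since $\tau(r)\in\Z_{\q}(\m)$. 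Hence $\tau$ factors through $\cc=\f/(\rr\#_{\q}\f)$ as $\bar{\tau}\colon\cc\twoheadrightarrow\m$ with $\pi\bar{\tau}=\bar{\pa}$. Since $\bar{\tau}$ is an epimorphism, $\bar{\tau}(\Z(\cc))\subseteq\Z(\m)=\Ker\pi$, so $\bar{\pa}(\Z(\cc))=0$, i.e. $\Z(\cc)\subseteq\Ker\bar{\pa}$; combined with the reverse inclusion $\Ker\bar{\pa}\subseteq\Z_{\q}(\cc)\subseteq\Z(\cc)$ coming from $\q$-centrality of $\bar{\pa}$, we get $\Z(\cc)=\Ker\bar{\pa}$. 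Applying Corollary~\ref{corE1}, every element of $\Z^{\curlywedge}_{\q}(\g)$ is of the form $\bar{\pa}(c)$ for some $c\in\Z(\cc)=\Ker\bar{\pa}$, hence equals $0$; so $\Z^{\curlywedge}_{\q}(\g)=\{0\}$.

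\textbf{Main obstacle.} The only delicate point is the use of Corollary~\ref{corE1} (equivalently Lemma~\ref{lemmaE2}), whose validity rests on Lemma~\ref{lemmaE1} and hence on the $\q$-torsion-freeness of $\La$ assumed in this section; this is what allows the identification $[\cc,\cc]\cong\g\curlywedge^{\q}\g$ that converts the centrality condition $[c,x]=0$ into the exterior-product condition $\bar{\pa}(c)\wedge\bar{\pa}(x)=0$. Everything else is a direct transcription of the argument for Theorem~\ref{theorem2}, with $\Z$ replacing $\Z_{\q}$ on the side of $\cc$ and no condition $\{g\}=0$ appearing, since $\Z^{\curlywedge}_{\q}$ is defined purely in terms of the wedge pairing. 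One should double-check that the two inclusions $\Ker\bar{\pa}\subseteq\Z_{\q}(\cc)$ (from $\q$-centrality) and $\Z(\cc)\subseteq\Ker\bar{\pa}$ fit together correctly to force the coincidence $\Z(\cc)=\Z_{\q}(\cc)$ in the sufficiency direction, which is exactly what "strongly" requires beyond plain $\q$-capability.
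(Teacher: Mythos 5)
Your proposal is correct and follows essentially the same route as the paper: both directions use the $\q$-central extension $\bar{\pa}\colon\cc\twoheadrightarrow\g$ induced by a free presentation together with Corollary~\ref{corE1} to translate between $\Z(\cc)$ and $\Z^{\curlywedge}_{\q}(\g)$, and the necessity argument with $\tau(\rr\#_{\q}\f)=0$ and the factorization $\bar{\tau}$ matches the paper's verbatim. Your closing remark about the role of the $\q$-torsion-freeness of $\La$ (via Lemmas~\ref{lemmaE1} and~\ref{lemmaE2}) correctly identifies the standing hypothesis of this section.
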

\begin{proof} The proof is essentially the same as that of Theorem \ref{theorem2}. But still, we are going to give full details
for the reader's convenience.

First, suppose that $\Z^{\curlywedge}_{\q}(\g)=0$. Consider a $\q$-central extension
$\bar{\pa} \colon \cc \twoheadrightarrow \g$ induced by a free presentation of $\g$. It suffices to show that $\Ker \bar{\pa} =\Z_{\q}(\cc)=\Z(\cc)$.
Since $\bar{\pa} \colon \cc \twoheadrightarrow \g$ is a $\q$-central extension, we have
$\Ker \bar{\pa} \subseteq \Z_{\q}(\cc) \subseteq \Z(\cc)$. To prove the inverse inclusions it suffices to show that
$\Z(\cc) \subseteq \Ker \bar{\pa}$. Indeed, if $x\in \Z(\cc)$, then by Corollary~\ref{corE1},
$\bar{\pa}(x)\in \Z^{\curlywedge}_{\q}(\g)=\{0\}$ and so $x\in \Ker \bar{\pa}$.

Conversely, let $\g$ be a strongly $\q$-capable Lie algebra. Then there is a Lie algebra $\m$
such that $\Z_{\q}(\m)=\Z_{\q}(\m)$ and $\m/\Z(\m)\cong \g$. Hence,
there is an epimorphism of Lie algebras $\pi \colon \m\twoheadrightarrow \g$ such that
$\Ker \pi =\Z(\m)=\Z_{\q}(\m)$.
Let $0\to \s\to \f \overset{\tau}{\to} \m\to 0$ be a free presentation of $\m$.
Consider the free presentation $0\to \rr\to \f \overset{\pa}{\to} \g\to 0$ of $\g$,
where $\pa= \pi\tau$ and  $\rr$ denotes the kernel of $\pa$. Then we have  the following commutative diagram
 with exact rows

\begin{equation*}
\xymatrix@+10pt{
0\ \ar@{->}[r]& \rr \ \ar@{->}[r]
\ar@{->}[d]_{ }
 &\f\ar@{->}[r]^{\pa}
\ar@{->}[d]_{\tau }
&\g \ar@{->}[r]
\ar@{=}[d]_{ }
&0 \\
0\ \ar@{->}[r]
& \Z(\m) = \Z_{\q}(\m) \ar@{->}[r]
 & \m \ar@{->}[r]^{\pi}
& \g \ar@{->}[r]
&0 .}
\end{equation*}
We have $\tau (\rr\#_{\q} \f)=0$. Indeed, for each $r\in \rr$ and $f\in \f$,
$\tau[r,f]=[\tau(r), \tau(f)]=0$ because $\tau(r)\in \Z(\m) =\Ker\pi$. Moreover, for each $r\in \rr$,
$\tau (\q r)=\q\tau(r)=0$ because $\tau(r)\in \Z_{\q}(\m)$. Denote $\f/(\rr\#_{\q} \f)$
by $\cc$. Then we have the following induced commutative triangle of Lie algebras
\begin{equation*}
\xymatrix@+10pt{
& \cc\ar@{->}[rd]^{\bar{\pa}}
\ar@{->}[rr]^{\bar{\tau} }
& &\m \ar@{->}[ld]_{\pi}
 \\
& &   \g &  
}
\end{equation*}
Since $\bar{\tau}$ is an epimorphism, $\bar{\tau}(\Z(\cc))\subseteq \Z(\m)= \Ker \pi $, therefore
$\bar{\pa}(\Z(\cc))=0$, i.e. $\Z(\cc)\subseteq \Ker \bar{\pa}$. On the other hand, since
$\bar{\pa} \colon \cc\to \g$ is a $\q$-central extension,
$\Ker \bar{\pa} \subseteq \Z_{\q}(\cc) \subseteq \Z(\cc)$.
We get that $\Z(\cc)= \Ker \bar{\pa}$. Then, by Corollary \ref{corE1} we have
\[
\Z^{\curlywedge}_{\q}(\g)=\bar{\pa}(\Z(\cc))=\bar{\pa}(\Ker \bar{\pa})=0.
\]
\end{proof}


\begin{proposition} If $\g$ is a perfect Lie algebra, i.e. $\g = [\g, \g]$, then
\[
\Z^{\boxtimes}_{\q}(\g) = \Z^{\curlywedge}_{\q}(\g) = \Z(\g).
\]
\end{proposition}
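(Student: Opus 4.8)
The plan is to prove the chain of inclusions
$\Z(\g)\subseteq\Z^{\boxtimes}_{\q}(\g)\subseteq\Z^{\curlywedge}_{\q}(\g)\subseteq\Z(\g)$,
where the last two hold for an arbitrary Lie algebra and only the first uses perfectness.

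First I would record the two inclusions that need no hypothesis on $\g$. The natural epimorphism $\g\otimes^{\q}\g\twoheadrightarrow\g\wedge^{\q}\g$, $g\otimes x\mapsto g\wedge x$, shows that if $g\otimes x=0_{\g\otimes^{\q}\g}$ for all $x\in\g$ then $g\wedge x=0_{\g\wedge^{\q}\g}$ for all $x\in\g$; hence $\Z^{\boxtimes}_{\q}(\g)\subseteq\Z^{\curlywedge}_{\q}(\g)$. For $\Z^{\curlywedge}_{\q}(\g)\subseteq\Z(\g)$ I would apply the $\q$-crossed module homomorphism $\xi^{\wedge}\colon\g\wedge^{\q}\g\to\g$ from Lemma~\ref{lemma xi}: if $g\wedge x=0_{\g\wedge^{\q}\g}$ for every $x\in\g$, then $[g,x]=\xi^{\wedge}(g\wedge x)=\xi^{\wedge}(0)=0$ for every $x$, i.e. $g\in\Z(\g)$.

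The substantial inclusion is $\Z(\g)\subseteq\Z^{\boxtimes}_{\q}(\g)$, which is where $\g=[\g,\g]$ enters. Fix $g\in\Z(\g)$ and an arbitrary $x\in\g$; using perfectness, write $x=\sum_{i}[a_{i},b_{i}]$ with $a_{i},b_{i}\in\g$. Applying the defining relation \eqref{t5} of $\g\otimes^{\q}\g$ (with the distinguished ideal taken to be $\g$ itself) gives $g\otimes[a_{i},b_{i}]=[b_{i},g]\otimes a_{i}-[a_{i},g]\otimes b_{i}$. Both brackets vanish because $g$ is central, and $0\otimes a_{i}=0\otimes b_{i}=0_{\g\otimes^{\q}\g}$ (e.g. by relation \eqref{t1} with scalar $0$), so $g\otimes[a_{i},b_{i}]=0_{\g\otimes^{\q}\g}$. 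Summing over $i$ via \eqref{t3} yields $g\otimes x=0_{\g\otimes^{\q}\g}$, and since $x$ was arbitrary, $g\in\Z^{\boxtimes}_{\q}(\g)$.

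Concatenating the three inclusions gives $\Z^{\boxtimes}_{\q}(\g)=\Z^{\curlywedge}_{\q}(\g)=\Z(\g)$. I do not expect a genuine obstacle here; the only points requiring a little care are the bookkeeping with the relations defining $\g\otimes^{\q}\g$—in particular which slot of the tensor plays the role of the distinguished ideal in \eqref{t5}—and the observation that the symbols $\{-\}$ never intervene, since the Ellis-type centers $\Z^{\boxtimes}_{\q}$ and $\Z^{\curlywedge}_{\q}$ are defined solely through the conditions $g\otimes x=0$, respectively $g\wedge x=0$.
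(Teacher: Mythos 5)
Your proposal is correct and follows essentially the same route as the paper: the two easy inclusions come from the natural maps $\g\otimes^{\q}\g\to\g\wedge^{\q}\g\to\g$, and the substantial inclusion $\Z(\g)\subseteq\Z^{\boxtimes}_{\q}(\g)$ is obtained exactly as in the paper by writing an arbitrary element as a linear combination of brackets and applying relation \eqref{t5} together with centrality. The only (immaterial) difference is that you spell out the first two inclusions in slightly more detail than the paper does.
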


So, if $\La$ is a $\q$-torsion-free ring, then $\g=\La$ is a $\q$-capable but not a strongly $\q$-capable Lie algebra.
\begin{proof} We have well-defined homomorphisms
$\g\boxtimes^{\q} \g \to \g\curlywedge^{\q}\g \to \g$, given by $g\otimes h \mapsto g\wedge h$ and  $g\wedge h\mapsto [g, h]$, respectively. They imply the
inclusions
\[
\Z^{\boxtimes}_{\q}(\g)  \subseteq \Z^{\curlywedge}_{\q}(\g) \subseteq \Z(\g).
\]
Therefore, it suffices to show that $\Z(\g) \subseteq \Z^{\boxtimes}_{\q}(\g)$.
Let $x\in \Z(\g)$ and $g\in \g$. Then there are $g_i, h_i \in \g$ and $\lam_i \in \La$,
for $i=1,\dots, n$, such that $g=\overset{n}{\underset{i=1}{\sum}}\lam_i [g_i, h_i]$. Then
\[
x \otimes g = x\otimes \overset{n}{\underset{i=1}{\sum}}\lam_i [g_i, h_i] =
\overset{n}{\underset{i=1}{\sum}} \lam_i \big(x\otimes  [g_i, h_i]\big) =
\overset{n}{\underset{i=1}{\sum}} \lam_i \big([h_i, x]\otimes g_i -  [g_i, x]\otimes h_i \big) = 0,
\]
which means that $x\in \Z^{\boxtimes}_{\q}(\g)$ and the proof is completed.
\end{proof}

\begin{corollary} A perfect Lie algebra $\g$ is $\q$-capable if and only if $\Z(\g)=0$.
\end{corollary}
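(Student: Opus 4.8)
The plan is to derive the corollary from Theorem~\ref{theorem2}, which says that $\g$ is $\q$-capable precisely when $\Z^{\wedge}_{\q}(\g)=0$. Thus it suffices to prove that for a perfect Lie algebra $\g$ one has $\Z^{\wedge}_{\q}(\g)=\Z(\g)$. One inclusion is automatic from the chain $\Z^{\wedge}_{\q}(\g)\subseteq\Z_{\q}(\g)\subseteq\Z(\g)$ noted after Definition~\ref{Def q-tensor center}; in particular, if $\Z(\g)=0$ then $\Z^{\wedge}_{\q}(\g)=0$ and $\g$ is $\q$-capable, which already settles the ``if'' direction. (One may also get this from Theorem~\ref{theoremE1} and the preceding Proposition: $\Z(\g)=\Z^{\curlywedge}_{\q}(\g)=0$ makes $\g$ strongly $\q$-capable, hence $\q$-capable.)

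For the reverse inclusion $\Z(\g)\subseteq\Z^{\wedge}_{\q}(\g)$, fix $g\in\Z(\g)$. Since $\g$ is perfect it is $\q$-perfect, so $\g\otimes^{\q}\g=\g\wedge^{\q}\g$. First, running the computation from the proof of the preceding Proposition in this algebra shows $g\wedge x=0$ for all $x\in\g$: write $x=\sum_{i}\lambda_{i}[g_{i},h_{i}]$ and expand using relation~\eqref{t5} to get $g\wedge x=\sum_{i}\lambda_{i}\bigl([h_{i},g]\wedge g_{i}-[g_{i},g]\wedge h_{i}\bigr)=0$, the last equality because $g$ is central. It remains to check $\{g\}=0$. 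Using that $\g$ is perfect, write $g=\sum_{i}\lambda_{i}[a_{i},b_{i}]$; then relations~\eqref{t8} and~\eqref{t10} give $\{g\}=\sum_{i}\lambda_{i}\{[a_{i},b_{i}]\}=\q\bigl(\sum_{i}\lambda_{i}\,a_{i}\wedge b_{i}\bigr)$, and I would argue that this element is $0$ by combining the centrality of $g=\xi^{\wedge}\bigl(\sum_{i}\lambda_{i}\,a_{i}\wedge b_{i}\bigr)$ with the $\q$-crossed-module identities of Proposition~\ref{Prop 2.6} and Lemma~\ref{lemma xi}(ii) (which control the generators $\{-\}$ and yield $\{\xi^{\wedge}(z)\}=\q z$). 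With $\{g\}=0$ and $g\wedge x=0$ for all $x$, we obtain $g\in\Z^{\wedge}_{\q}(\g)$.

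The main obstacle is exactly this last point: proving $\{g\}=0$ in $\g\wedge^{\q}\g$ for a central element $g$ of a perfect Lie algebra. The natural tool is the identification of $\g\wedge^{\q}\g=\g\otimes^{\q}\g$ as the universal $\q$-central extension $\xi^{\wedge}\colon\g\wedge^{\q}\g\to\g$ for $\q$-perfect $\g$, through which the $\{-\}$ generators, and hence $\{g\}$, are pinned down. Once $\Z^{\wedge}_{\q}(\g)=\Z(\g)$ is established, Theorem~\ref{theorem2} gives the stated equivalence. (For comparison, the companion assertion that the \emph{strongly} $\q$-capable perfect Lie algebras are exactly those with $\Z(\g)=0$ is immediate from Theorem~\ref{theoremE1} and the preceding Proposition; what the present corollary adds is the coincidence $\Z^{\wedge}_{\q}(\g)=\Z^{\curlywedge}_{\q}(\g)$ in the perfect case.)
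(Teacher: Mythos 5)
Your ``if'' direction is fine, but the step you yourself flag as the main obstacle is a genuine gap, and it cannot be closed: for a central element $g$ of a perfect Lie algebra one does \emph{not} in general have $\{g\}=0$ in $\g\wedge^{\q}\g$. By Lemma~\ref{lemma xi}, $\xi^{\wedge}(\{g\})=\q g$, so $\{g\}=0$ already forces $\q g=0$; this is just the inclusion $\Z^{\wedge}_{\q}(\g)\subseteq\Z_{\q}(\g)$ recorded after Definition~\ref{Def q-tensor center}, and it shows that a central element which is not $\q$-torsion can never lie in the $\q$-exterior center. Hence the inclusion $\Z(\g)\subseteq\Z^{\wedge}_{\q}(\g)$ you are after is false whenever $\Z(\g)\neq\Z_{\q}(\g)$. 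Concretely, over $\La=\mathbb{C}$ with $\q\geq 1$, a perfect Lie algebra with nonzero center (e.g.\ an affine Kac--Moody algebra, or any universal central extension of a centerless perfect Lie algebra with nontrivial second homology) has $\Z_{\q}(\g)=0$, so it is trivially $\q$-capable as $\g\cong\g/\Z_{\q}(\g)$, yet $\Z(\g)\neq 0$. So the ``only if'' direction of the corollary, read literally with Definition~\ref{Def q-capable}, fails, and no amount of manipulation of the crossed-module identities will produce $\{g\}=0$.

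What the paper intends (it gives no written proof) is the statement for \emph{strongly} $\q$-capable Lie algebras in the sense of Definition~\ref{Def strong}: the corollary is meant to follow immediately from the preceding Proposition, which gives $\Z^{\curlywedge}_{\q}(\g)=\Z(\g)$ for perfect $\g$, together with Theorem~\ref{theoremE1}; combined with the remark that strong $\q$-capability implies $\q$-capability this also recovers your ``if'' direction. Your closing parenthetical diagnoses the situation exactly: the surplus content of the literal statement is the coincidence $\Z^{\wedge}_{\q}(\g)=\Z^{\curlywedge}_{\q}(\g)$ for perfect $\g$, and that coincidence is precisely what breaks down (the two differ by the condition $\{g\}=0$, i.e.\ by $\q$-torsion of the center). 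The correct conclusion to draw is that the corollary should be read as characterizing strong $\q$-capability, not that $\{g\}=0$ holds for central $g$.
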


\

\subsection*{Acknowledgements}
We thank Guram Donadze for the useful discussions. 
\subsection*{Funding}
The authors were supported by Agencia Estatal de Investigación de Espa\~na. 
(Spain, European FEDER support included), grant PID2020-115155GB-I00. Emzar Khmaladze was financially supported by EU fellowships for Georgian researchers, 2023 (57655523) and by  Shota Rustaveli National Science Foundation of Georgia, grant FR-22-199.


\begin{thebibliography}{10}
\bibitem{Ba} R. Baer, \emph{Groups with preassigned central and central quotient group}, Trans. Amer. Math. Soc. \textbf{44}(3) (1938), 387--412.
\bibitem{BaOlDoRo}
R. Bastos, R. de Oliveira, G. Donadze and N. R. Rocco, \emph{$q$-Tensor and exterior centers, related degrees and capability}, Appl. Categorical Structures \textbf{31}(1) (2023), Paper No. 2, 16 pp.

\bibitem{BeFeSc}
F. R. Beyl, U. Felgner and P. Schmid, \emph{On groups occurring as center factor groups}, J. Algebra \textbf{61} (1979), 161--177.

\bibitem{BeTa}
 F. R. Beyl and J. Tappe, \emph{Groups extensions, representations, and the Schur multiplicator},
Lecture Notes in Math. 958, Springer, 1982.

\bibitem{Brd}
W. Browder, \emph{Algebraic $K$-theory with coefficients $\mathbb{Z}/p$}, 
Geom. Appl. Homotopy Theory I, Proc. Conf., Evanston 1977, Lecture Notes in Math. 657, Springer-Verlag,  1978, pp. 40--84. 

\bibitem{Brn}
R. Brown, \emph{$q$-perfect groups and $q$-central extensions}, Publ. Matematiques 34 (1990), 291--297.

\bibitem{CoInIn}
D. Conduch\'e, H. Inassaridze and N. Inassaride, \emph{Mod $q$ cohomology and Tate-Vogel cohomology of groups}, J. Pure Appl. Algebra \textbf{189}(1-3) (2004), 61--87.

\bibitem{CoRo}
D. Conduch\'e and C. Rodr\'iguez-Fern\'andez, \emph{Non-abelian tensor and exterior products
	modulo $q$ and universal $q$-central relative extension},  J. Pure Appl. Algebra \textbf{78}(2) (1992), 139--160.

\bibitem{El3}
G. J. Ellis, \emph{Non-abelian exterior products of Lie algebras and an exact sequence in the homology of Lie algebras},
J. Pure Appl. Algebra \textbf{46}(2-3) (1987), 111--115.

\bibitem{El1}
G. J. Ellis, \emph{A non-abelian tensor product of Lie algebras},
 Glasgow Math. J. \textbf{33}(1) (1991), 101--120.

\bibitem{El2}
G. J. Ellis, \emph{Tensor products and q-crossed modules},
 J. London Math. Soc.(2) \textbf{51}(2)  (1995), 243--258.

\bibitem{ElRo}
G. J. Ellis and C. Rodr\'iguez-Fern\'andez, \emph{An exterior product for the homology of groups with integral coefficients modulo p}, Cahiers Topologie G\'eom. Diff\'erentielle Cat\'eg. \textbf{30}(4) (1989), 339--343.

\bibitem{Ha}
P. Hall, \emph{The classification of prime-power groups}, J. Reine Angew. Math. \textbf{182} (1940), 130--141.


\bibitem{InKhLa}
N. Inassaridze, E. Khmaladze and M. Ladra, \emph{Non-abelian homology of Lie algebras}, Glasgow Math. J. \textbf{46}(2) (2004), 417--429.

\bibitem{KaLa}
M. Karoubi and Th. Lambre, \emph{Quelques classes caract\'eristiques en th\'eorie des nombres}, J. Reine Angew.
Math. \textbf{543} (2002), 169--186.

\bibitem{KaLo} C. Kassel and J.-L. Loday, \emph{Extensions centrales d'alg\`ebres de Lie}, Ann. Inst. Fourier
(Grenoble) \textbf{32} (1982), 119--142.

\bibitem{Kh1}
E. Khmaladze, \emph{Non-abelian tensor and exterior products modulo $q$ and universal $q$-central relative extension of Lie algebras},
Homology Homotopy  Appl. \textbf{1}(9) (1999),  187--204. 	

\bibitem{Kh2}
E. Khmaladze, \emph{ Homology of Lie algebras with $\Lambda/q \Lambda$ coefficients and exact sequences},
 Theory  Appl. Categ. \textbf{10} (2002),  113--126. 	


\bibitem{KhKuLa}
E. Khmaladze, R. Kurdiani and M. Ladra \emph{On the capability of Leibniz algebras},
Georgian Math. J. \textbf{28}(2) (2021), 271--279.

\bibitem{Ne}
J. Neisendorfer, \emph{Primary homotopy theory}, Mem. Amer. Math. Soc. \textbf{25} (1980), no. 232.

\bibitem{Ni2}
P. Niroomand, \emph{Some properties on the tensor square of Lie algebras},
J. Algebra  Appl. \textbf{11}(5)  (2012), 1250085, 6 pp.

\bibitem{NiJoPa}
P. Niroomand, F.  Johari and M. Parvizi, \emph{Capable Lie algebras with the derived subalgebra of dimension 2 over an arbitrary field},
Linear Multilinear Algebra \textbf{67}(3) (2019), 542--554.

\bibitem{NiPaRu}
P. Niroomand, M. Parvizi and F. G. Russo, \emph{Some criteria for detecting capable Lie algebras},
J. Algebra \textbf{384} (2013), 36--44.

\bibitem{SaAlMo}
A. R. Salemkar, V. Alamian and H. Mohammadzadeh, \emph{Some properties of the Schur multiplier and covers of Lie algebras}, Comm. Algebra \textbf{36}(2) (2008), 697--707.

\bibitem{SiTy}
D. Simson and A. Tyc, \emph{Connected sequences of stable derived functors and their applications},
Dissertationes Math. (Rozprawy Mat.) \textbf{111} (1974),  67 pp.

\bibitem{SuVo}
 A. Suslin and V. Voevodsky, \emph{Bloch-Kato conjecture and motivic
cohomology with finite coefficients},  The arithmetic and geometry of algebraic cycles, pp. 117--189, NATO Sci. Ser. C Math. Phys. Sci., 548, Kluwer Acad. Publ., Dordrecht, 2000.
\end{thebibliography}
\end{document}